\newcommand{\EE}{\mathbb{E}}
\newcommand{\bbR}{\mathbb{R}}
\newcommand{\bbZ}{\mathbb{Z}}
\newcommand{\bfbeta}{\pmb{\beta}}
\newcommand{\bfc}{\pmb{c}}
\newcommand{\bfh}{\pmb{h}}
\newcommand{\bfw}{\pmb{w}}
\newcommand{\bfv}{\pmb{v}}
\newcommand{\bfy}{\pmb{y}}
\newcommand{\bfz}{\pmb{z}}
\newcommand{\Normal}{\mathcal{N}}
\newcommand{\LogNormal}{\mathcal{LN}}
\newcommand{\CP}{\texttt{CP}}
\newcommand{\DC}{\texttt{DC}}
\newcommand{\MIP}{\texttt{MIP}}
\newcommand{\MIPR}{\texttt{MIP-R}}
\newcommand{\LP}{\texttt{LP}}
\newcommand{\GA}{\texttt{GA}}
\newcommand{\UB}{\texttt{UB}}
\newcommand{\cl}{\operatorname{cl}}
\newcommand{\gr}{\texttt{gr}}
\newcommand{\bione}{b_i^{(1)}}
\newcommand{\bitwo}{b_i^{(2)}}
\newcommand{\bone}{b^{(1)}}
\newcommand{\btwo}{b^{(2)}}
\DeclareMathOperator{\poly}{\mathsf{poly}}
\newcommand{\smallpm}[1]{\text{\tiny{$\pm #1$}}}
\newtheorem{corollary}{Corollary}
\newtheorem{definition}{Definition}
\newtheorem{lemma}{Lemma}
\newtheorem{proposition}{Proposition}
\newtheorem{theorem}{Theorem}
\title{Contextual Reserve Price Optimization in Auctions via Mixed-Integer Programming}
\author{
Joey Huchette \\
Rice University \\
\texttt{joehuchette@rice.edu} \\
\And
Haihao Lu \\
University of Chicago \\
\texttt{haihao.lu@chicagobooth.edu} \\
\And
Hossein Esfandiari \\
Google Research \\
\texttt{esfandiari@google.com} \\
\And
Vahab Mirrokni \\
Google Research \\
\texttt{mirrokni@google.com}
}
\begin{document}

\maketitle
\begin{abstract}
  We study the problem of learning a linear model to set the reserve price in an auction, given contextual information, in order to maximize expected revenue from the seller side. First, we show that it is not possible to solve this problem in polynomial time unless the \emph{Exponential Time Hypothesis} fails. Second, we present a strong mixed-integer programming (MIP) formulation for this problem, which is capable of exactly modeling the nonconvex and discontinuous expected reward function. Moreover, we show that this MIP formulation is ideal (i.e. the strongest possible formulation) for the revenue function of a single impression. Since it can be computationally expensive to exactly solve the MIP formulation in practice, we also study the performance of its linear programming (LP) relaxation. Though it may work well in practice, we show that, unfortunately, in the worst case the optimal objective of the LP relaxation can be $\mathcal{O}(\text{number of samples})$ times larger than the optimal objective of the true problem. Finally, we present computational results, showcasing that the MIP formulation, along with its LP relaxation, are able to achieve superior in- and out-of-sample performance, as compared to state-of-the-art algorithms on both real and synthetic datasets. More broadly, we believe this work offers an indication of the strength of optimization methodologies like MIP to exactly model intrinsic discontinuities in machine learning problems.
\end{abstract}




\maketitle






\section{Introduction}


Digital advertising is a tremendously fast growing industry: the worldwide digital advertising expenditure was \$283 billion in 2018, and it is estimated to further grow to \$517 billion in 2023.\footnote{``Digital advertising spending worldwide 2018-2023'', retrieved May 25 2020 from \url{https://www.statista.com/statistics/237974/online-advertising-spending-worldwide/}}

Real time bidding (RTB) stands out as one of the most significant developments of the past decade in the space of advertisement allocation mechanisms, as it is widely utilized by major online advertising platforms including--but not limited to--Google, Facebook, and Amazon. In RTB for display ads, a user visiting a webpage instantaneously triggers an auction held by an Ad Exchange, wherein the winner of the auction earns the ad slot and pays the publisher a certain price.

A form of auction commonly used in practice by Ad Exchanges is a second-price auction with reserve price \cite{easley2010networks}. In such auctions, the publisher or Ad Exchange sets a reserve price before the auction is held, and the highest bidder wins the ad slot and pays the maximum of the second price and the reserve price. Reserve prices can increase revenue if they are set between the top two bids, but can also lead to a failed auction if set too high.

One central question for Ad Exchanges is \emph{how to set the reserve price for each incoming impression in order to maximize the total revenue}. In general, the reserve price is set based on the contextual information of the ad campaign, including publisher data (e.g. ad site and ad size), user data (e.g. device type and various geographic information), and time (e.g. date and hour). In this paper, we aim to learn an offline linear model to set the reserve price in order to maximize total revenue on the seller side, using available contextual information. We model this via the optimization problem
\begin{equation} \label{eqn:original-problem}
    \max_{\bfbeta \in X} \quad R(\bfbeta):= \frac{1}{n}\sum_{i=1}^n r(\bfw^i \cdot \bfbeta; \bione, \bitwo),
\end{equation}
where $\bione$ and $\bitwo$ are, respectively, the (nonnegative) highest bidding price and the second highest bidding price of impression $i$, $\bfw^i \in \mathbb{R}^d$ is the contextual feature vector of impression $i$, and $X = [L,U]^d \subset \bbR^d$ is a bounded hypercube which serves as a feasible region for the model parameters $\bfbeta$. Note that, by artificially modifying the problem data, \eqref{eqn:original-problem} can readily recover a first price auction (by setting $\btwo=\bone$) or a pure price-setting problem (by setting $\btwo=0$). Additionally, $r$ is the discontinuous reward function
\begin{equation} \label{eqn:reward-function}
    r(v; \bone, \btwo) \coloneqq \begin{cases}
        \btwo & v \leq \btwo \\
        v & \btwo < v \leq \bone \\
        0 & v > \bone
    \end{cases}.
\end{equation}

\begin{figure}[!tbp]
    \centering
    \includegraphics[width=0.45\textwidth, trim=0 0 0 0, clip]{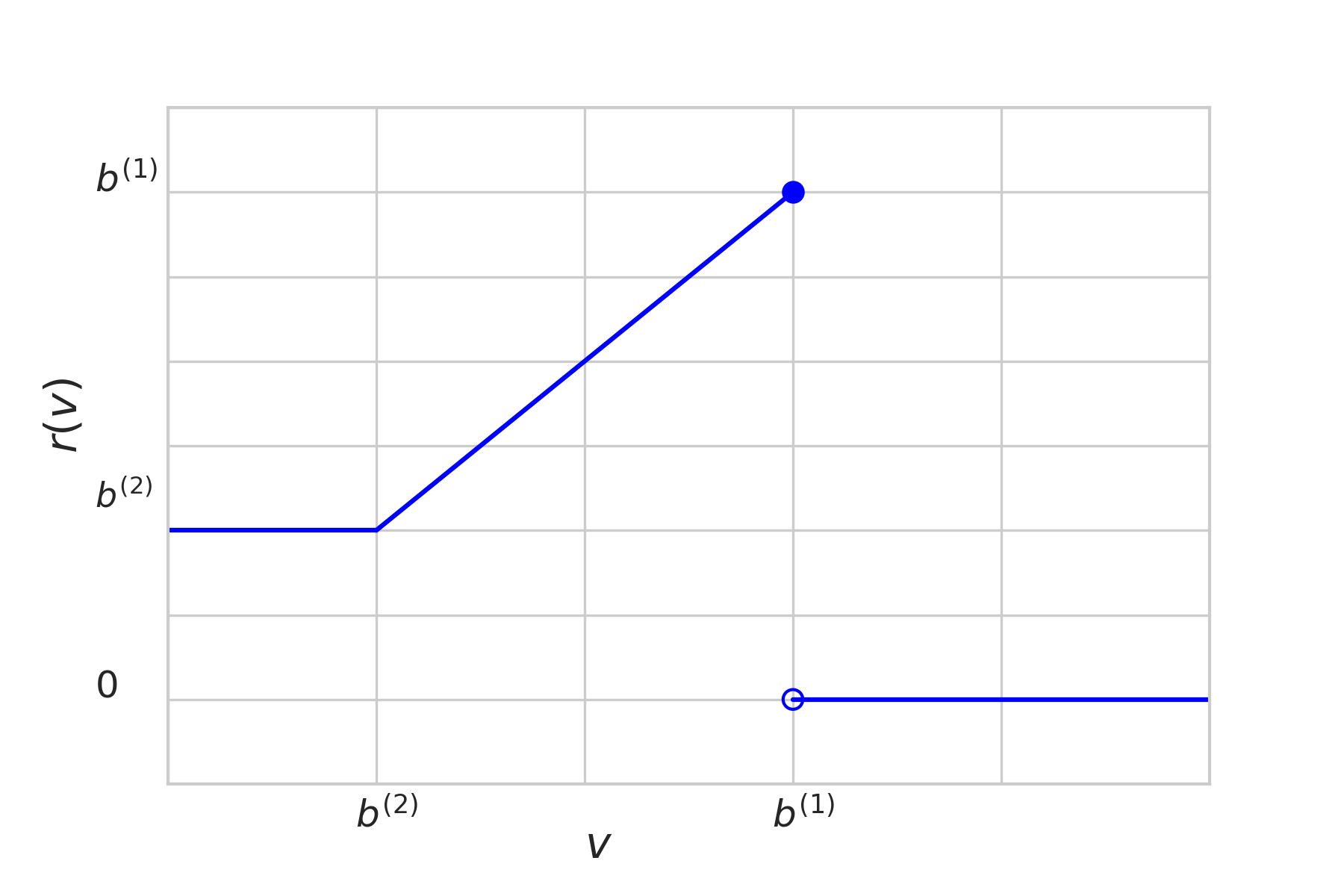}
    \includegraphics[width=0.45\textwidth, trim=0 0 0 0, clip]{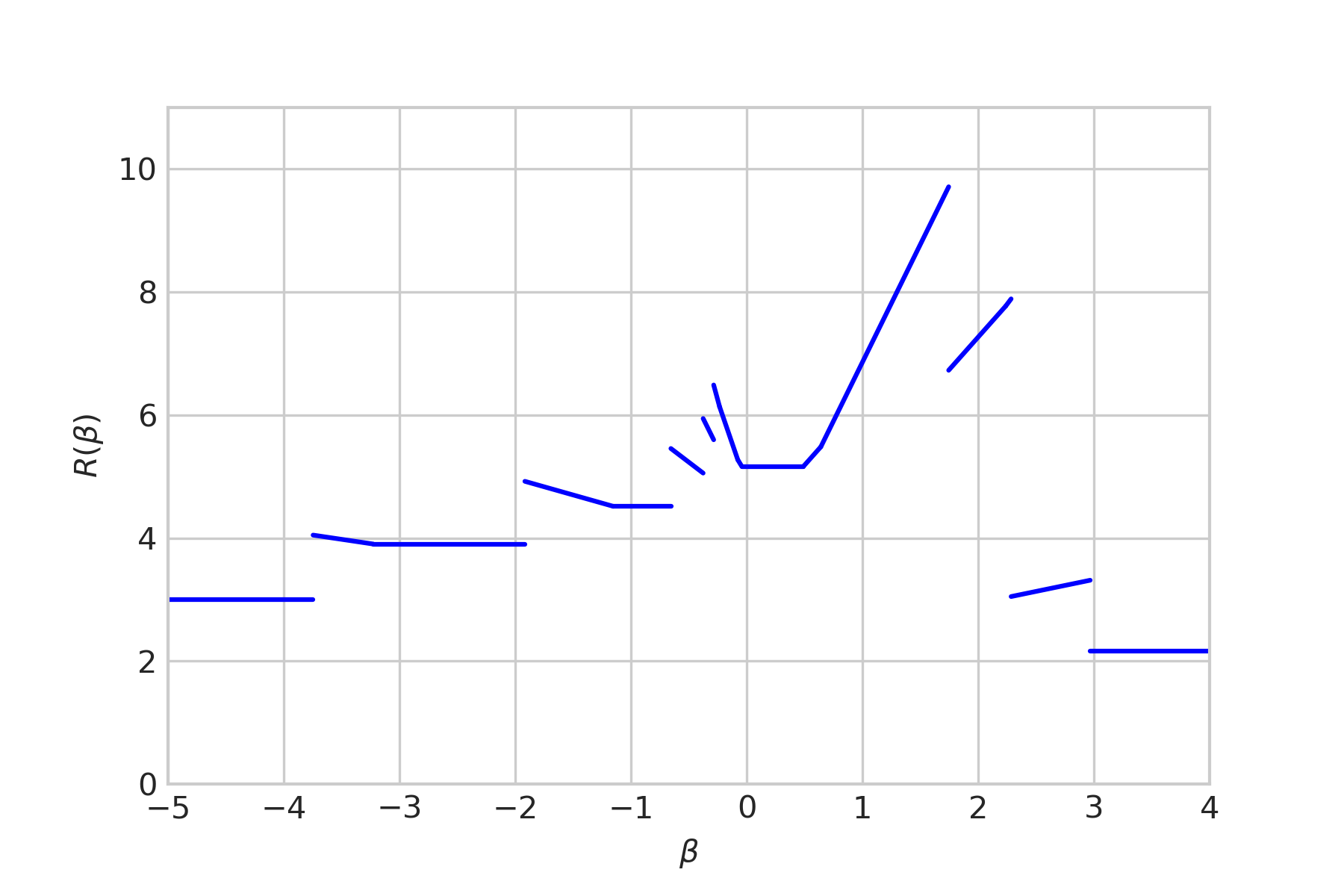}
  \caption{\textbf{(Left)} The revenue function $r(v;\bone, \btwo)$. \textbf{(Right)} Average revenue function $R(\bfbeta)$ with $d=1$ features and $n=8$ samples.}
  \label{fig}
\end{figure}

Figure \ref{fig} plots the reward function $r(v;\bone,\btwo)$, which is a simple univariate (though discontinuous) function for given bidding prices $\bone$ and $\btwo$.
If the reserve price is set below $\btwo$, the auction reverts to a second price auction. If the seller manages to set the reserve price in the ``sweet spot'' between $\btwo$ and $\bone$, the seller can capture additional revenue from the auction. However, the seller must be wary not to set the reserve price too high, as in this case the sale does not occur and the reward drops to zero. Note that the reserve price is set after the contextual information is observed, but before the bidding prices are observed, making this price setting problem nontrivial.

Although the univariate function $r(\cdot;\bone,\btwo)$ is simple, the average revenue function $R$ can be extremely complicated, even for small problem instances. Figure~\ref{fig} plots the average revenue $R(\beta)$ over 8 samples as a function of a single feature $\beta\in \bbR$, randomly drawn from a log-normal distribution as specified in Section \ref{sec:computation}. As we can see in Figure \ref{fig}, the average revenue function $R$ has many local maximizers and is discontinuous, even in the small-sample, univariate setting. This complexity will only be exacerbated in the large-sample, multivariate case which is the focus of this paper.

As the reserve price must be set before the auction is held, any proposed model must allow extremely fast inference. However, the model can be trained off-line and then updated at regular intervals (e.g. daily or weekly), meaning that learning need not happen in real time. We focus on linear models in this submission, as their simplicity, interpretability, and fast inference lend themselves exceedingly well to the RTB setting. However, the proposed formulation and techniques can also be extended to more complicated machine learning models, such as kernel methods and (optimal) decision trees. Moreover, many RTB platforms support an incredibly large number of auctions, meaning that an enormous amount of training data is available. This motivates learning algorithms which are highly scalable and, ideally, parallelizable.

\subsection{Our Results}
Our contribution in this work is threefold.

\textbf{Hardness (Section~\ref{sec:hard}).}
Our first main result is to build off the intuition gleaned from Figure~\ref{fig} to show that \eqref{eqn:original-problem} is, indeed, a hard problem. In particular, we show that there is no algorithm that solves \eqref{eqn:original-problem} in polynomial time unless the Exponential Time Hypothesis fails.
The Exponential Time Hypothesis is a very popular assumption is computational complexity concerning the $3$-SAT problem~\cite{karp1975computational}, and it is the basis of many hardness results~\cite{aggarwal2018gap,braverman2017eth,braverman2014approximating,chen2012exponential,cygan2015lower,jonsson2013complexity,lokshtanov2011lower,manurangsi2017almost,vassilevska2015hardness}.
The Exponential Time Hypothesis states that $3$-SAT can not be solved in subexponential time in the worst case.
In order to show this result, we reduce our problem to the classic $k$-densest subgraph problem.

\textbf{New algorithms (Section~\ref{sec:mip}).}
Our second main result is an exact model for the problem using Mixed-Integer Programming (MIP). MIP is an optimization methodology capable of modeling complex, nonconvex feasible regions, and which is widely used in practice. In particular, MIP allows us to \emph{exactly} model the underlying discontinuous reward function, without relying on convex or continuous proxies which may be poor approximations or require careful hyperparameter tuning.

One issue with MIP is that it is not scalable beyond medium-sized instances, and so is it cannot be brought to bear on problem instances with millions of past observations. In order to deal with the large-scale problems in daily auctions, we propose a Linear Programming (LP) relaxation of our MIP formulation. Modern LP solvers, such as Gurobi, are capable of solving very large LPs with millions of variables. The solution to the LP not only provides a valid upper bound to the optimal expected revenue, but can also yield feasible solutions to \eqref{eqn:original-problem}. While these solutions are often of relatively good quality, we show that in the worst case the LP relaxation can produce arbitrarily bad bounds on the true optimal reward.

\textbf{Computational validation (Section~\ref{sec:computation}).}
Finally, we present a thorough computational study on both synthetic and real data. We start with a low-dimensional artificial data set where we observe that existing methods, while exhibiting low generalization error, are substantially outperformed by our approaches. We also study a real data set comprised of eBay sports memorabilia auctions, where we observe a consistent improvement of our MIP-based methods over existing techniques. In both studies, we observe that our MIP formulation substantially outperforms the LP relaxation, its convex counterpart, suggesting the merit of using principled nonconvex approaches for this problem.


\subsection{Related Work}\label{sec:related_liter}
{\bf Reserve price optimization.} Reserve price optimization has been widely studied in both academia and industry due to its critical role in online advertisement. The main departure of our setting from much of the literature in this area is our explicit incorporation of the contextual information $\bfw$ into the optimization. Most previous theoretical works proceed under the assumption that the bidding prices come from a certain distribution without the consideration of contextual information. For example, \cite{cesa2014regret} shows a regret minimization under the assumption that all bids are independently drawn from the same unknown distribution; \cite{kanoria2017dynamic} shows the constant reserve is optimal when the distribution is known and satisfies certain regularity assumptions; and \cite{amin2013learning} studies the case when the buyers are strategic and would like to maximize their long-term surplus.

In practice, however, an Ad Exchange logs contextual information of every auction and uses this data to determine future reserve price. For example, in a large field study at Yahoo!~\cite{ostrovsky2011reserve}, contextual information was used to learn the bidding distribution of buyers, which was then use to set up the future reserve price. This is an indirect use of contextual information. In contrast, \eqref{eqn:original-problem} builds a linear model for reserve price optimization by directly using the contextual information.


To the best of our knowledge, the only work which directly uses the contextual information to set up the reserve price is that of Mohri and Medina~\cite{mohri2016learning}. In order to handle the discontinuity in the revenue function $r$, \cite{mohri2016learning} present a continuous piecewise linear surrogate function, and optimize over this surrogate function using difference-of-convex programming. There are several difficulties of the method proposed in \cite{mohri2016learning}: (i) it is non-trivial to tune the hyper-parameter $\gamma$ in the surrogate function, which controls the closeness of the two problems and the hardness to solve the surrogate problem; (ii) the global convergence of difference-of-convex programming is slow (requiring, e.g., a cutting plane or branch-and-bound method) and requires a careful implementation \cite{horst1999dc}, and (iii) it can only find a local optimizer of the surrogate problem. In contrast, we directly solve the reserve price optimization problem \eqref{eqn:original-problem} by mixed-integer programming.

{\bf Mixed-integer programming for piecewise linear functions.}
Mixed-integer programming has long been used to model piecewise linear functions in a number of application areas as disparate as operations~\cite{Croxton:2003,Croxton:2007,Liu:2015a}, analytics~\cite{Bertsimas:2017a,Bertsimas:2015e}, engineering~\cite{Fugenschuh:2014,Graf:1990}, and robotics~\cite{Deits:2014,Deits:2015,Kuindersma:2016,Mellinger:2012}. In this literature, our approach is most related to a recent strain of approaches applying MIP to model high-dimensional piecewise linear functions arising as trained neural networks for various tasks such as verification and reinforcement learning~\cite{Anderson:2019,Anderson:2020,Ryu:2019,Tjeng:2017}. Moreover, there are sophisticated and mature implementations of algorithms for mixed-integer programming (i.e. \emph{solvers}) that can reliably solve many instances of practical interest in reasonable time frames.

{\bf Hardness.}
We study the hardness of the reserve price optimization problem \eqref{eqn:original-problem} and show that it is impossible to solve in polynomial time unless the \emph{Exponential Time Hypothesis}~\cite{impagliazzo2001complexity} fails.
The exponential time hypothesis is a very popular assumption in computational complexity and it is the basis for many hardness results such as approximating the best Nash equilibrium~\cite{braverman2014approximating}, $k$-densest subgraph~\cite{braverman2017eth, impagliazzo2001complexity}, SVP~\cite{aggarwal2018gap}, network design~\cite{chitnis2014tight}, and many others~\cite{lokshtanov2011lower,vassilevska2015hardness,jonsson2013complexity,chen2012exponential,cygan2015lower,manurangsi2017almost}.


\section{Hardness}\label{sec:hard}
In this section we show the hardness of the reserve price optimization problem \eqref{eqn:original-problem}. Specifically, we show that it is not possible to solve this problem in polynomial time unless the Exponential Time Hypothesis fails. We prove this by showing that a polynomial time optimal algorithm for this problem implies a polynomial time constant approximation algorithm for the \emph{$k$-densest subgraph problem}.

\begin{definition}[$k$-densest subgraph problem]
Take a graph $G=(V_G,E_G)$, where $V_G$ represents the vertex set and $E_G$ represents the edge set. The goal is to find a subgraph $H=(V_H,E_H)\subseteq G$ with $|V_E|=K$ that maximizes $\frac{|E_H|}{|V_H|}$ .
\end{definition}

There is no $ |V_G|^{-1/\poly(\log \log |V_G|)} $- approximation polynomial time algorithm for the $ k $-densest subgraph problem unless the exponential time hypothesis fails~\cite{manurangsi2017almost}. Therefore, we produce a reduction that gives the following result.

 \begin{theorem}\label{thm:hardness}
There is no polynomial time algorithm for the reserve price optimization problem \eqref{eqn:original-problem}, unless the Exponential Time Hypothesis fails.
 \end{theorem}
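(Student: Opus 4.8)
\textit{Proof plan.} The plan is to exhibit a polynomial-time reduction from the $k$-densest subgraph problem to \eqref{eqn:original-problem}. Given a graph $G=(V_G,E_G)$ with $N:=|V_G|$ and an integer $1\le k\le N$, write $\mathrm{DSP}(G,k):=\max\{\,|E(S)| : S\subseteq V_G,\ |S|=k\,\}$, where $E(S)$ denotes the edges with both endpoints in $S$ (this is $k$ times the $k$-densest subgraph value). I will construct, in polynomial time, an instance of \eqref{eqn:original-problem} whose optimal objective value is an affine function of $\mathrm{DSP}(G,k)$ with known coefficients. Then any polynomial-time algorithm solving \eqref{eqn:original-problem} to optimality would compute $\mathrm{DSP}(G,k)$ exactly, hence be a polynomial-time exact (in particular constant-factor) approximation for $k$-densest subgraph, contradicting \cite{manurangsi2017almost} unless the Exponential Time Hypothesis fails.

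\textbf{The construction.} Take $d=N$ and $X=[0,1]^N$, reading coordinate $\beta_u$ as the indicator that vertex $u$ is chosen. Let $\mathbf 1\in\bbR^N$ be the all-ones vector and $\mathbf e_u\in\{0,1\}^N$ the $u$-th standard basis vector; repeated impressions are allowed, since they merely rescale terms in the average. First, for every edge $e=\{u,v\}\in E_G$ include one \emph{edge impression} with $\bfw=\mathbf e_u+\mathbf e_v$ and bids $\bone=2$, $\btwo=1$: since $\bfw\cdot\bfbeta=\beta_u+\beta_v\in[0,2]$, its reward is $r(\beta_u+\beta_v;2,1)=1+\max\{0,\beta_u+\beta_v-1\}$, which equals $2$ when $\beta_u=\beta_v=1$ and $1$ whenever at most one of $\beta_u,\beta_v$ equals $1$. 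Second, include $W:=2|E_G|+1$ identical \emph{cardinality impressions}, each with $\bfw=\mathbf 1$ and bids $\bone=\btwo=k$: each has reward $r(\mathbf 1\cdot\bfbeta;k,k)=k$ if $\mathbf 1\cdot\bfbeta\le k$ and $0$ otherwise. All data — dimension $N$, box $[0,1]^N$, $\{0,1\}$-valued feature vectors, bids in $\{1,2,k\}$, and $|E_G|+W=O(|E_G|)$ impressions — have size polynomial in $G$.

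\textbf{Correctness.} Let $P:=\{\bfbeta\in[0,1]^N : \mathbf 1\cdot\bfbeta\le k\}$. I would first argue that every maximizer of $R$ lies in $P$: outside $P$ the cardinality impressions contribute $0$ and the total (un-normalized) reward is at most $2|E_G|$, whereas everywhere in $P$ the cardinality impressions alone contribute $Wk\ge 2|E_G|+1$. Next, on $P$ the un-normalized objective is
\[
Wk+|E_G|+\sum_{\{u,v\}\in E_G}\max\{0,\beta_u+\beta_v-1\},
\]
a convex function of $\bfbeta$ (a sum of constants and of the convex functions $\max\{0,\cdot\}$ of affine maps), so its maximum over the polytope $P$ is attained at an extreme point of $P$; the extreme points of the capped simplex $P$ are precisely the $0/1$ vectors with at most $k$ ones. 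Evaluating at $\bfbeta=\sum_{u\in S}\mathbf e_u$ with $|S|\le k$ gives $Wk+|E_G|+|E(S)|$, and since appending vertices never destroys internal edges, $\max_{|S|\le k}|E(S)|=\max_{|S|=k}|E(S)|=\mathrm{DSP}(G,k)$. Hence the optimal value of \eqref{eqn:original-problem} equals $\tfrac{1}{|E_G|+W}\big(Wk+|E_G|+\mathrm{DSP}(G,k)\big)$, from which $\mathrm{DSP}(G,k)$ is read off immediately (and from an optimal $\bfbeta$ one also recovers an optimal subgraph, since an optimal vertex of $P$ is the indicator of a densest $k$-subgraph).

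\textbf{Main obstacle.} The crux is forcing the optimum of \eqref{eqn:original-problem} to be attained at an \emph{integral} $\bfbeta$, despite $X$ being a continuous box. The gadgets are engineered so that three facts combine to achieve this: (i) the edge bids $(\bone,\btwo)=(2,1)$ make each edge's reward the \emph{convex}, nowhere-vanishing function $1+\max\{0,\beta_u+\beta_v-1\}$ on $[0,2]$, so the objective is maximized at a vertex of the feasible polytope; (ii) the polytope on which the optimum lies, the capped simplex $P$, is integral; and (iii) a large enough multiplicity $W$ on an \emph{exact} cardinality step ($\bone=\btwo=k$) confines the optimum to $P$. Checking that these coexist — together with verifying polynomial sizes and turning an exact solver for \eqref{eqn:original-problem} into a contradiction with the approximation-hardness of $k$-densest subgraph — finishes the proof; the hardest conceptual point is choosing bid values so that convexity (hence an integral optimizer) and the exact cardinality step hold at once.
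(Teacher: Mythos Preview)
Your proposal is correct and follows the same high-level strategy as the paper --- a reduction from $k$-densest subgraph with two gadget types: cardinality impressions carrying the all-ones feature and edge impressions carrying two-hot features --- but the mechanics differ in an interesting way. The paper uses edge bids $(\bone,\btwo)=(2,1.5)$ and cardinality bids $(k,0)$; it does \emph{not} force the optimizer of \eqref{eqn:original-problem} to be integral. Instead, it argues that any optimal $\bfbeta$ must satisfy $\sum_v\beta_v\le k$, thresholds the coordinates at $1/2$ to obtain a vertex set $V^{\bfbeta}$ of size at most $2k$ whose induced edge set has $|E^{\bfbeta}|\ge|E_H|$, and then covers a $2k$-vertex graph by eight $k$-vertex subgraphs to extract a $1/8$-approximate densest $k$-subgraph via pigeonhole. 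Your bid choices $(\bone,\btwo)=(2,1)$ and $(k,k)$ are sharper: the first makes each edge's reward $1+\max\{0,\beta_u+\beta_v-1\}$, a convex function on $[0,2]$, and the second turns the cardinality penalty into a hard step, so on the capped simplex $P$ the objective is convex and its maximum sits at a vertex of $P$, which for integer $k$ is a $0/1$ point. This gives an \emph{exact} reduction (the optimal value of \eqref{eqn:original-problem} is an explicit affine function of $\mathrm{DSP}(G,k)$) and hence even NP-hardness, which is stronger than what is needed. The paper's route buys robustness --- it does not depend on polytope integrality or convexity of the per-impression reward --- while yours buys simplicity and avoids the rounding/covering step entirely.
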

The proofs for Theorem~\ref{thm:hardness}, and all other technical results, are deferred to the Appendix.

\section{New formulations}\label{sec:mip}

In this section, we develop a mixed-integer programming (MIP) formulation for solving \eqref{eqn:original-problem}, study its important computational properties, and discuss how to use it in practice.

MIP is an common optimization methodology capable of modeling complex, nonconvex constraints. MIP formulations comprise a set of linear constraints in the decision variables, along with integrality constraints on some (or all) of the variables.

In order to model \eqref{eqn:original-problem} with MIP, we first start with the graph of the revenue function $r(\cdot; \bone, \btwo)$, which is defined as $\gr(r(\cdot; \bone, \btwo); D) := \Set{ (v,y) | v \in D, \: y = r(v; \bone, \btwo) }$. This set is not closed, due to the discontinuity of $r$ at input $\bone$.
Nonetheless, \eqref{eqn:original-problem} can be reformulated using closures:
\begin{subequations} \label{eqn:graph-opt}
\begin{alignat}{2}
    \max_{\bfbeta,\bfv,\bfy}\quad& \frac{1}{n} \sum_{i=1}^n y_i \label{eqn:graph-opt-1} \\
    \text{s.t.}\quad& v_i = \bfw^i \cdot \bfbeta \quad &\forall i \in \llbracket n \rrbracket \label{eqn:graph-opt-2} \\
    & (v_i,y_i) \in \cl(\gr(r(\cdot; \bione, \bitwo); [l_i, u_i])) \quad &\forall i \in \llbracket n \rrbracket \label{eqn:graph-opt-3} \\
    & \bfbeta \in X, \label{eqn:graph-opt-4}
\end{alignat}
\end{subequations}
where the bounds on the $v$ variables are computed as $l_i := \min_{\bfbeta \in X} \bfw^i \cdot \bfbeta$ and $u_i := \max_{\bfbeta \in X} \bfw^i \cdot \bfbeta$. It is straightforward to add a learned constant offset term $\beta_0$ to the model by changing \eqref{eqn:graph-opt-2} to $v_i = \bfw^i \cdot \bfbeta + \beta_0$, though we omit it for the remainder of the section for notational simplicity. 

\begin{proposition} \label{prop:closure-is-fine}
    If a point $(\bfbeta,\bfv,\bfy)$ is an optimal solution for \eqref{eqn:graph-opt}, then $\bfbeta$ is an optimal solution for \eqref{eqn:original-problem}. Conversely, if $\bfbeta$ is an optimal solution for \eqref{eqn:original-problem}, then there exists some $\bfv$ and $\bfy$ such that $(\bfbeta,\bfv,\bfy)$ is an optimal solution for \eqref{eqn:graph-opt}.
\end{proposition}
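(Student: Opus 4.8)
The plan is to first make the closure in \eqref{eqn:graph-opt-3} explicit enough to control, and then to observe that replacing the graph of $r$ by its closure only ever adds a point that is \emph{dominated} in the maximization, so that neither the optimal value nor the set of optimal $\bfbeta$'s can change. For the first part, fix bids $\bone\ge\btwo\ge 0$ and a closed interval $[l,u]\subset\bbR$; the claim is that
\[
  \gr(r(\cdot;\bone,\btwo);[l,u]) \;\subseteq\; \cl\big(\gr(r(\cdot;\bone,\btwo);[l,u])\big) \;\subseteq\; \gr(r(\cdot;\bone,\btwo);[l,u]) \cup \{(\bone,0)\}.
\]
The left inclusion is immediate; the right one holds because $r(\cdot;\bone,\btwo)$ is continuous on $[l,u]$ away from the single jump at $\bone$, whose one-sided limits are $\bone$ from the left and $0$ from the right, so the only graph sequences that can converge to a point not already on the graph are of the form $(v_k,0)$ with $v_k\downarrow\bone$. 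From this I extract the two facts I will actually use: \textbf{(a)} every $(v,y)$ in the closure satisfies $v\in[l,u]$ and $y\le r(v;\bone,\btwo)$ (an equality at genuine graph points, while $r(\bone;\bone,\btwo)=\bone\ge 0=y$ at the possibly-added point); and \textbf{(b)} $(v,r(v;\bone,\btwo))$ lies in the closure for every $v\in[l,u]$.

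Next I show the two problems have the same optimal value $v^\star:=\sup_{\bfbeta\in X}R(\bfbeta)$, attained in \eqref{eqn:graph-opt}. For any $(\bfbeta,\bfv,\bfy)$ feasible for \eqref{eqn:graph-opt}, constraint \eqref{eqn:graph-opt-2} together with $\bfbeta\in X$ forces $v_i=\bfw^i\cdot\bfbeta\in[l_i,u_i]$, and then \eqref{eqn:graph-opt-3} with fact~(a) gives $y_i\le r(v_i;\bione,\bitwo)=r(\bfw^i\cdot\bfbeta;\bione,\bitwo)$, so the objective satisfies $\tfrac1n\sum_i y_i\le R(\bfbeta)\le v^\star$. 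Conversely, for any $\bfbeta\in X$ the point with $v_i:=\bfw^i\cdot\bfbeta$ and $y_i:=r(v_i;\bione,\bitwo)$ is feasible for \eqref{eqn:graph-opt} by fact~(b), with objective exactly $R(\bfbeta)$; hence the optimal value of \eqref{eqn:graph-opt} is $v^\star$. It is attained because the feasible region of \eqref{eqn:graph-opt} is closed -- which is precisely the reason for writing \eqref{eqn:graph-opt-3} with closures -- and bounded (with $X$ compact each $v_i$ ranges in the compact interval $[l_i,u_i]$, and each $y_i$ is then confined by fact~(a) to a bounded set, since $r(\cdot;\bione,\bitwo)$ is bounded on $[l_i,u_i]$), hence compact, while the objective is linear.

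The two implications now follow. If $(\bfbeta,\bfv,\bfy)$ is optimal for \eqref{eqn:graph-opt}, then its objective equals $v^\star$, so the chain $v^\star=\tfrac1n\sum_i y_i\le R(\bfbeta)\le v^\star$ from fact~(a) forces $R(\bfbeta)=v^\star$; since $\bfbeta\in X$, it is optimal for \eqref{eqn:original-problem} (and, incidentally, $v^\star$ is attained there, so the converse's hypothesis is not vacuous). Conversely, if $\bfbeta$ is optimal for \eqref{eqn:original-problem}, i.e. $R(\bfbeta)=v^\star$, then taking $v_i:=\bfw^i\cdot\bfbeta$ and $y_i:=r(v_i;\bione,\bitwo)$ yields, by fact~(b), a feasible point of \eqref{eqn:graph-opt} whose objective $R(\bfbeta)=v^\star$ equals the optimal value, so it is optimal. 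The only step with genuine content is the closure computation -- specifically, that forming the closure merely \emph{adds} the dominated point $(\bone,0)$ rather than altering the graph elsewhere; the rest is bookkeeping around the inequality $y\le r(v)$ and the compactness of the feasible region of \eqref{eqn:graph-opt}. One should also keep an eye on degenerate data ($\bone=\btwo$, $\btwo=0$, or $\bone\notin[l_i,u_i]$), but each case only simplifies the closure and the argument is unchanged.
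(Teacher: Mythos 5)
Your proof is correct and follows essentially the same route as the paper's: both hinge on the observation that taking closures in \eqref{eqn:graph-opt-3} only adds the dominated point $(\bione,0)$, so lifting any $\bfbeta\in X$ via $y_i=r(\bfw^i\cdot\bfbeta;\bione,\bitwo)$ and, conversely, bounding $y_i\leq r(v_i;\bione,\bitwo)$ at an optimum of \eqref{eqn:graph-opt} gives the two-way correspondence. The only additions beyond the paper's argument are your explicit description of the closure and the compactness remark guaranteeing attainment, which are harmless elaborations rather than a different approach.
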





We can now construct a mixed-integer programming formulation for \eqref{eqn:graph-opt-3}.
\begin{proposition} \label{prop:valid-formulation}
    A valid MIP formulation for the constraint
    \begin{equation}\label{eqn:one-sample}
        (v,y) \in \cl(\gr(r(\cdot; \bone, \btwo); [l,u]))
    \end{equation}
    is:
    \begin{subequations} \label{eqn:mip-formulation}
        \begin{alignat}{3}
            y &\leq \btwo z_1 + \bone z_2, \quad &y &\geq \btwo(z_1 + z_2) \label{eqn:mip-constr-1} \\
            y &\leq v + (\btwo - l)z_1 - \bone z_3, \quad &y &\geq v - uz_3 \label{eqn:mip-constr-2}
        \end{alignat}
        \begin{gather}
            l \leq v \leq u \label{eqn:mip-constr-3} \\
            z_1 + z_2 + z_3 = 1, \quad \bfz \in [0,1]^3 \label{eqn:mip-constr-4} \\
            \bfz  \in \bbZ^3. \label{eqn:mip-constr-5}
        \end{gather}
    \end{subequations}
\end{proposition}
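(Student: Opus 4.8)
I interpret validity in the usual sense: the set of $(v,y)$ for which there exists $\bfz$ making $(v,y,\bfz)$ feasible for \eqref{eqn:mip-constr-1}--\eqref{eqn:mip-constr-5} should equal $\cl(\gr(r(\cdot;\bone,\btwo);[l,u]))$. The first step is to write this closure concretely. Since $r(\cdot;\bone,\btwo)$ is piecewise linear with a single downward jump at $v=\bone$, its graph over $[l,u]$ is the union of at most three segments: a horizontal piece at height $\btwo$ over $v\le\btwo$, the diagonal $y=v$ over $\btwo\le v\le\bone$, and a horizontal piece at height $0$ over $v>\bone$, each intersected with $l\le v\le u$. Taking closures simply closes up the half-open segments; since the closure of a finite union is the union of closures, the result is the union of the corresponding \emph{closed} segments $S_1,S_2,S_3$ (some possibly empty, depending on how $l$ and $u$ sit relative to $\btwo$ and $\bone$), and in particular it does \emph{not} contain the vertical segment from $(\bone,0)$ to $(\bone,\bone)$.

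Next I would check that the projection of the formulation is contained in $S_1\cup S_2\cup S_3$. From \eqref{eqn:mip-constr-4}--\eqref{eqn:mip-constr-5} we have $\bfz\in\{0,1\}^3$, and with $z_1+z_2+z_3=1$ this forces $\bfz\in\{e_1,e_2,e_3\}$, so I would substitute each case into \eqref{eqn:mip-constr-1}--\eqref{eqn:mip-constr-3}. For $\bfz=e_1$, \eqref{eqn:mip-constr-1} forces $y=\btwo$ and \eqref{eqn:mip-constr-2} gives $v\le y\le v+(\btwo-l)$; with $l\le v\le u$ this is exactly $S_1$. For $\bfz=e_2$, \eqref{eqn:mip-constr-2} collapses to $y=v$, \eqref{eqn:mip-constr-1} gives $\btwo\le y\le\bone$, and with $l\le v\le u$ this is $S_2$. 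For $\bfz=e_3$, \eqref{eqn:mip-constr-1} forces $y=0$ and \eqref{eqn:mip-constr-2} becomes $v-u\le 0\le v-\bone$, i.e.\ $\bone\le v\le u$, which is $S_3$.

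For the reverse inclusion I would take $(v,y)\in\cl(\gr)$, observe it lies in some $S_j$, set $\bfz=e_j$, and verify \eqref{eqn:mip-constr-1}--\eqref{eqn:mip-constr-5}; these are the same (in)equalities used in the easy direction, with the bound constraints following from $0\le\btwo\le\bone$ and $l\le v\le u$. Combining the two inclusions proves the proposition.

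Each branch of the case analysis is a one-line computation, so the only real care needed is in the closure step: pinning down which of $S_1,S_2,S_3$ are nonempty for a given $(l,u,\bone,\btwo)$ and confirming the formulation degenerates the same way. I would argue this uniformly rather than by enumeration --- in every branch the constraints derived on $v$ describe exactly the $v$-projection of $S_j$, so $S_j=\emptyset$ forces that branch of the formulation to be infeasible and no spurious points appear (for instance, if $u<\bone$ then $S_3=\emptyset$ and the $\bfz=e_3$ branch requires $\bone\le v\le u$, a contradiction). This is the crux of the argument, and it is also what certifies that the formulation captures the discontinuity of $r$ exactly.
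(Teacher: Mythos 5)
Your proof is correct and follows essentially the same route as the paper's: enumerate the three cases $\bfz = e_1, e_2, e_3$ forced by \eqref{eqn:mip-constr-4}--\eqref{eqn:mip-constr-5} and check that each branch reduces exactly to the closed piece $S_1$, $S_2$, or $S_3$ whose union is $\cl(\gr(r(\cdot;\bone,\btwo);[l,u]))$ (the paper records this last fact as a separate lemma). You are somewhat more explicit than the paper about the reverse inclusion and the degenerate cases where some $S_j$ is empty, but these are refinements of the same argument rather than a different approach.
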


Piecing it all together, we can present a MIP formulation for the original problem \eqref{eqn:original-problem}.

\begin{corollary} \label{cor:entire-mip-formulation}
  Take $F(\bone,\btwo,l,u)$ as the set of all points feasible for \eqref{eqn:mip-formulation}, given data $\bone$, $\btwo$, $l$, and $u$. Modify \eqref{eqn:graph-opt} by, for each $i \in \llbracket n \rrbracket$, replacing the constraint \eqref{eqn:graph-opt-3} with the constraint $(v_i,y_i) \in F(\bione, \bitwo, l_i, u_i)$; call this modification \emph{\MIP{}}.
  Then \eqref{eqn:original-problem} is equivalent to \emph{\MIP{}} in the sense that: (i) if $(\bfbeta,\bfv,\bfy)$ is an optimal solution to \emph{\MIP{}}, then $\bfbeta$ is an optimal solution to \eqref{eqn:original-problem}, and (ii) if $\bfbeta$ is an optimal solution to \eqref{eqn:original-problem}, then there exists some $\bfv$ and $\bfy$ such that $(\bfbeta,\bfv,\bfy)$ is an optimal solution to \emph{\MIP{}}.
\end{corollary}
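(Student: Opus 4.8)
The plan is to chain Propositions~\ref{prop:closure-is-fine} and~\ref{prop:valid-formulation} together through a projection argument. Proposition~\ref{prop:valid-formulation} says that \eqref{eqn:mip-formulation} is a \emph{valid formulation} for \eqref{eqn:one-sample}; by definition this means the projection of $F(\bone,\btwo,l,u)$ onto the $(v,y)$ coordinates is exactly $\cl(\gr(r(\cdot;\bone,\btwo);[l,u]))$ — every $(v,y)$ in that closure extends to some feasible $(v,y,\bfz) \in F(\bone,\btwo,l,u)$, and conversely every feasible point of $F$ has its $(v,y)$ component in the closure.

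First I would set up the formal correspondence between feasible regions. A tuple $(\bfbeta,\bfv,\bfy,\{\bfz^i\}_{i\in\llbracket n\rrbracket})$ is feasible for \MIP{} iff $\bfbeta\in X$, $v_i=\bfw^i\cdot\bfbeta$ for all $i$, and $(v_i,y_i,\bfz^i)\in F(\bione,\bitwo,l_i,u_i)$ for all $i$. The blocks $F(\bione,\bitwo,l_i,u_i)$ share no variables across distinct $i$ (each involves only $v_i$, $y_i$, and its own copy $\bfz^i$), and neither the objective \eqref{eqn:graph-opt-1} nor the constraints \eqref{eqn:graph-opt-2}, \eqref{eqn:graph-opt-4} involve any $\bfz^i$. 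Hence I can project out all the $\bfz^i$ simultaneously, invoking Proposition~\ref{prop:valid-formulation} once per index: $(\bfbeta,\bfv,\bfy)$ is the projection of some feasible point of \MIP{} iff $\bfbeta\in X$, $v_i=\bfw^i\cdot\bfbeta$, and $(v_i,y_i)\in\cl(\gr(r(\cdot;\bione,\bitwo);[l_i,u_i]))$ for every $i$ — i.e. iff $(\bfbeta,\bfv,\bfy)$ is feasible for \eqref{eqn:graph-opt}. Since the two problems carry the identical objective $\frac1n\sum_i y_i$, which depends on none of the projected-out variables, they have equal optimal value; $(\bfbeta,\bfv,\bfy,\{\bfz^i\})$ is optimal for \MIP{} exactly when $(\bfbeta,\bfv,\bfy)$ is optimal for \eqref{eqn:graph-opt}, and any optimal $(\bfbeta,\bfv,\bfy)$ of \eqref{eqn:graph-opt} lifts to an optimal point of \MIP{} by choosing, for each $i$, a witness $\bfz^i$ supplied by Proposition~\ref{prop:valid-formulation}.

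I would then finish by composing this equivalence with Proposition~\ref{prop:closure-is-fine}. For (i): if $(\bfbeta,\bfv,\bfy,\{\bfz^i\})$ is optimal for \MIP{}, then $(\bfbeta,\bfv,\bfy)$ is optimal for \eqref{eqn:graph-opt}, so Proposition~\ref{prop:closure-is-fine} makes $\bfbeta$ optimal for \eqref{eqn:original-problem}. For (ii): if $\bfbeta$ is optimal for \eqref{eqn:original-problem}, Proposition~\ref{prop:closure-is-fine} yields $\bfv,\bfy$ with $(\bfbeta,\bfv,\bfy)$ optimal for \eqref{eqn:graph-opt}, and the lifting from the previous paragraph produces the required $\bfz^i$'s.

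The only point needing care — more bookkeeping than genuine difficulty — is the simultaneous block-by-block projection of the $\bfz^i$: one must verify that the auxiliary variables for different samples and the shared variables $(\bfbeta,\bfv,\bfy)$ couple only through the per-sample membership constraints, so that Proposition~\ref{prop:valid-formulation} applies independently for each $i$ with no cross-interaction. Everything else is routine.
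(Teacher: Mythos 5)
Your proposal is correct and matches the paper's intent: the paper provides no separate proof of this corollary, treating it as the direct composition of Proposition~\ref{prop:closure-is-fine} with Proposition~\ref{prop:valid-formulation} applied sample-by-sample, which is exactly your argument. Your extra care about the per-sample $\bfz^i$ blocks decoupling is sound bookkeeping but not a departure from the intended route.
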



\subsection{The tightness of the formulation} One measure of the quality of a MIP formulation is how tightly its LP relaxation approximates the set it formulates. MIP formulations with tight relaxations are likely to solve much more quickly than those with looser relaxations. The tightest possible MIP formulation is \emph{ideal}, wherein all extreme points of the LP relaxation are integral~\cite{Vielma:2015}. The next proposition shows that \eqref{eqn:mip-formulation} is ideal for \eqref{eqn:one-sample}.
\begin{proposition} \label{prop:ideal}
    The MIP formulation \eqref{eqn:mip-formulation} is ideal for \eqref{eqn:one-sample}, in the sense that the linear programming relaxation (\ref{eqn:mip-constr-1}-\ref{eqn:mip-constr-4}) is a description of the convex hull of all $(v,y,\bfz)$ feasible for \eqref{eqn:mip-formulation}.
\end{proposition}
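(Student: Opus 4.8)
My plan is to recognize \eqref{eqn:mip-formulation} as a disjunctive (Balas-type) formulation for the union of at most three line segments comprising $\cl(\gr(r(\cdot;\bone,\btwo);[l,u]))$, and then to show that aggregating out the per-segment ``copy'' variables of the canonical Balas extended formulation reproduces the inequalities (\ref{eqn:mip-constr-1}--\ref{eqn:mip-constr-4}) \emph{exactly}, with no projection-induced slack. Concretely, in the main case $l \le \btwo \le \bone \le u$ we have $\cl(\gr(r(\cdot;\bone,\btwo);[l,u])) = P_1 \cup P_2 \cup P_3$ with $P_1 = \Set{(v,\btwo) | l \le v \le \btwo}$, $P_2 = \Set{(v,v) | \btwo \le v \le \bone}$, and $P_3 = \Set{(v,0) | \bone \le v \le u}$; the left endpoint $(\bone,0)$ of $P_3$ is precisely what taking the closure supplies across the discontinuity. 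The remaining orderings of $l, \btwo, \bone, u$ only collapse some $P_j$ to a point (or the empty set) and I would dispatch them by the same argument at the end, after clipping the relevant interval endpoints.

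The first step is to check that setting $\bfz = \mathbf{e}_j$ in (\ref{eqn:mip-constr-1}--\ref{eqn:mip-constr-3}) collapses the system --- after cancelling redundant inequalities --- to exactly the defining inequalities of $P_j$. Since the only $\bfz \in \bbZ^3$ allowed by (\ref{eqn:mip-constr-4}) are the $\mathbf{e}_j$, this identifies the feasible set of \eqref{eqn:mip-formulation} with $\bigcup_{j=1}^3 \big( P_j \times \{\mathbf{e}_j\} \big)$, which is consistent with (and re-derives) Proposition~\ref{prop:valid-formulation}. The inclusion ``convex hull of this set $\subseteq$ LP relaxation'' is then immediate, as the LP relaxation is a convex set containing each $P_j \times \{\mathbf{e}_j\}$; the substance of the proposition is the reverse inclusion.

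For the reverse inclusion, given any $(v,y,\bfz)$ feasible for (\ref{eqn:mip-constr-1}--\ref{eqn:mip-constr-4}), I would produce a decomposition $(v,y) = \sum_{j=1}^3 (v^j,y^j)$ with $(v^j,y^j) \in z_j P_j$ (so $z_j P_j = \{0\}$ when $z_j = 0$): set $y^1 = \btwo z_1$, $y^3 = 0$, $v^2 = y^2 = y - \btwo z_1$, and split the leftover $s := v - v^2 = v - y + \btwo z_1$ as $v^1 + v^3$ with $v^1 \in [l z_1, \btwo z_1]$ and $v^3 \in [\bone z_3, u z_3]$. Such a split exists iff $l z_1 + \bone z_3 \le s \le \btwo z_1 + u z_3$, while $(v^2,y^2) \in z_2 P_2$ amounts to $\btwo(z_1 + z_2) \le y \le \btwo z_1 + \bone z_2$; a one-line rearrangement identifies these four scalar inequalities as \emph{precisely} (\ref{eqn:mip-constr-2}) and (\ref{eqn:mip-constr-1}) --- and one checks in passing that (\ref{eqn:mip-constr-3}) is then redundant in the main case. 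Hence $(v,y,\bfz) = \sum_{j : z_j > 0} z_j \big( v^j / z_j, \, y^j / z_j, \, \mathbf{e}_j \big)$ exhibits the point as a convex combination of integer-feasible points of \eqref{eqn:mip-formulation}, so the LP relaxation is contained in --- hence equal to --- the convex hull of the feasible set.

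The one genuinely substantive point is this last identity: that projecting the Balas copy variables out of the lifted convex hull introduces no slack, so the linear part of \eqref{eqn:mip-formulation} already \emph{is} the projected convex hull; the explicit decomposition above settles it. The rest I expect to be bookkeeping: verifying that the $\bfz = \mathbf{e}_j$ reductions really land on $P_j$ (and nothing coarser), and walking through the degenerate orderings of $l, \btwo, \bone, u$, where one of the intervals $[l z_1, \btwo z_1]$, $[\btwo z_2, \bone z_2]$, $[\bone z_3, u z_3]$ may be empty --- which the LP inequalities already account for --- so that feasible set and convex hull still coincide.
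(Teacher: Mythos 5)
Your argument is correct in the intended (non-degenerate) case and, while it starts from the same disjunctive viewpoint as the paper---the integer-feasible set is $\bigcup_{j=1}^3 \bigl(S_j \times \{\mathbf{e}^j\}\bigr)$ with the three segments of Lemma~\ref{lemma:cl-gr}---it finishes by a genuinely different mechanism. The paper invokes Balas's extended formulation \eqref{eqn:balas} for the convex hull of the union and then projects out the copy variables by symbolic elimination, leaving the final Fourier--Motzkin step (and the check that it lands exactly on (\ref{eqn:mip-constr-1}--\ref{eqn:mip-constr-4})) to the reader. You go the other way: for an arbitrary point of the LP relaxation you construct the lifting explicitly ($y^1=\btwo z_1$, $v^2=y^2=y-\btwo z_1$, $y^3=0$, and the split of $v-y+\btwo z_1$ into $v^1+v^3$), and you observe that the existence of this decomposition is equivalent, line by line, to \eqref{eqn:mip-constr-1} and \eqref{eqn:mip-constr-2}. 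That yields a self-contained convex-combination certificate which does not even need Balas's theorem and makes explicit the projection identity the paper only asserts; the paper's route is more mechanical and transfers routinely to other disjunctions, but yours is the more elementary and more fully verified argument. The one shaky point is your closing claim that the degenerate orderings of $l,\btwo,\bone,u$ go through automatically: if a piece is empty after clipping, ideality can genuinely fail. For instance, with $l=0$, $u=1$, $\btwo=2$, $\bone=3$ (so $S_2=S_3=\emptyset$), the point $v=1$, $y=2$, $\bfz=(\tfrac12,\tfrac12,0)$ satisfies (\ref{eqn:mip-constr-1}--\ref{eqn:mip-constr-4}) yet every integer-feasible point has $z_1=1$, so the relaxation strictly contains the convex hull. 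Hence the proposition really requires $l\le\btwo\le\bone\le u$ (nonempty pieces); the paper's proof makes the same implicit assumption, since Balas's construction needs nonempty disjuncts, so this does not distinguish your proof from theirs, but you should state the assumption rather than claim the degenerate cases also coincide.
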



\subsection{The feasible region}
While the statement of the problem \eqref{eqn:original-problem} constrains the model parameters $\bfbeta$ to lie within a bounded hypercube, it may be difficult to infer the correct size of the domain a priori. To illustrate, we present a low-dimensional family of instances where the problem data is bounded in magnitude, but nevertheless the magnitude of the optimal model parameters goes to infinity.
\begin{proposition} \label{prop:unbounded}
    Fix $n=2$ samples and $d=2$ features, and consider $X=\bbR^2$, i.e. the unbounded variant of \eqref{eqn:original-problem}. There exists a sequence of instances where the problem data is bounded in magnitude by one, and yet the magnitude of the unique optimal solution to \eqref{eqn:original-problem} grows arbitrarily large.
\end{proposition}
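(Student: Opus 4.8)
The plan is to exhibit an explicit one-parameter family of instances and compute the optimum by hand. The guiding intuition is that the single-impression revenue $r(v;\bone,\btwo)$ attains its largest possible value $\bone$ only at the single input $v=\bone$, provided $\btwo<\bone$: in the first branch of \eqref{eqn:reward-function} the revenue is $\btwo<\bone$, in the third it is $0$, and in the middle branch it equals $v\le\bone$, with equality only at $v=\bone$. Consequently, if the two feature vectors $\bfw^1,\bfw^2$ are linearly independent but nearly parallel, the only way to make $R$ hit its ceiling is to solve the ill-conditioned linear system $\bfw^1\cdot\bfbeta=b_1^{(1)}$, $\bfw^2\cdot\bfbeta=b_2^{(1)}$, whose solution has norm tending to infinity even though all of the problem data stays bounded by one.

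Concretely, I would take $\bfw^1=(1,0)$ and $\bfw^2=(1,\epsilon)$ for a parameter $\epsilon\in(0,1]$, with bids $b_1^{(1)}=1$, $b_1^{(2)}=0$, $b_2^{(1)}=\tfrac12$, $b_2^{(2)}=0$; every entry of the data then has absolute value at most $1$ (a harmless rescaling by $1/\sqrt{2}$ makes the feature vectors Euclidean-norm bounded by $1$ as well, should that be the intended notion). Using $r(v;\bone,\btwo)\le\bone$ for every $v$, the objective obeys $R(\bfbeta)=\tfrac12\big(r(\beta_1;1,0)+r(\beta_1+\epsilon\beta_2;\tfrac12,0)\big)\le\tfrac12\big(1+\tfrac12\big)=\tfrac34$, and this value is attained at $\bfbeta^\epsilon:=(1,-\tfrac1{2\epsilon})\in X=\bbR^2$. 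Hence $\bfbeta^\epsilon$ is optimal, and $\|\bfbeta^\epsilon\|\to\infty$ as $\epsilon\to0$; taking $\epsilon=1/m$ for $m=1,2,\dots$ yields the required sequence of instances.

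For uniqueness, note that $R(\bfbeta)=\tfrac34$ forces $r(\beta_1;1,0)=1$ and $r(\beta_1+\epsilon\beta_2;\tfrac12,0)=\tfrac12$ simultaneously. Since $b_i^{(2)}=0<b_i^{(1)}$, neither the constant branch (value $0$) nor the failed-auction branch (value $0$) of \eqref{eqn:reward-function} can equal $b_i^{(1)}$, so the per-sample maximum is achieved only in the middle branch at exactly $v_i=b_i^{(1)}$. This pins down $\beta_1=1$ and $\beta_1+\epsilon\beta_2=\tfrac12$, and by linear independence of $\bfw^1,\bfw^2$ this system has the single solution $\bfbeta^\epsilon$; thus $\bfbeta^\epsilon$ is the \emph{unique} maximizer for each $\epsilon$.

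The argument is short and contains no genuinely hard step. The one point that requires care is the uniqueness claim: one must rule out the constant ($\btwo$) branch and the $0$ branch of the reward function from attaining a sample's maximum revenue, which is exactly why the construction sets $b_i^{(2)}$ strictly below $b_i^{(1)}$ (here equal to $0$). The remaining bookkeeping — that the $2\times 2$ feature matrix stays entrywise bounded while its inverse blows up as $\epsilon\to0$ — is immediate from the explicit form.
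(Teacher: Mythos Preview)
Your proof is correct and follows essentially the same approach as the paper: both construct an explicit one-parameter family of $2\times 2$ instances with bounded data in which the per-sample maximum $r(v;b^{(1)},b^{(2)})=b^{(1)}$ is attained only at $v=b^{(1)}$, forcing the optimizer to solve an increasingly ill-conditioned linear system whose unique solution blows up. The paper's construction uses the symmetric pair $\bfw^{i,1}=(\sqrt{1-i^{-2}},i^{-1})$, $\bfw^{i,2}=(-\sqrt{1-i^{-2}},i^{-1})$ with common bids $b^{(1)}=1$, $b^{(2)}=0$ and optimum $(0,i)$, but your nearly-parallel variant with differing first bids works just as well, and your explicit verification of optimality and uniqueness is in fact more detailed than the paper's one-line assertion.
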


In other words, we cannot bound the magnitude of the components of an optimal solution solely as a function of $n$, $d$, and the magnitude of the data. However, due to existential representability results~\cite{Jeroslow:1984}, applying MIP formulation techniques to model \eqref{eqn:original-problem} will require a bounded domain $X$ on the model parameters. To circumvent this, we model the magnitude of the bounding box as a hyperparameter, and tune it using a validation data set. This is the same approach taken in the difference-of-convex algorithm due to Mohri and Medina~\cite{mohri2016learning}.


\subsection{The linear programming relaxation}
Our MIP formulation \eqref{eqn:mip-formulation} comprises two types of constraints: linear constraints (\ref{eqn:mip-constr-1}-\ref{eqn:mip-constr-4}), and integrality constraints \eqref{eqn:mip-constr-5}. The linear programming relaxation comprises only the linear constraints, and provides a valid dual upper bound on the optimal reward of a linear programming formulation. Furthermore, for this particular problem, each feasible solution for the linear programming relaxation corresponds to a feasible solution for the original problem \eqref{eqn:original-problem}.
\begin{proposition} \label{prop:lp-relaxation}
  Take $W(\bone,\btwo,l,u)$ as the set of all points feasible for the LP relaxation (\ref{eqn:mip-constr-1}-\ref{eqn:mip-constr-4}) of \eqref{eqn:one-sample}, given data $\bone$, $\btwo$, $l$, and $u$. Modify \eqref{eqn:graph-opt} by, for each $i \in \llbracket n \rrbracket$, replacing the constraint \eqref{eqn:graph-opt-3} with the constraint $(v_i,y_i) \in W(\bione, \bitwo, l_i, u_i)$; call this modification \emph{\LP{}}. Then \emph{\LP{}} is an LP relaxation of \eqref{eqn:original-problem}
    in the sense that the optimal reward for \emph{\LP{}} upper bounds the reward of any feasible solution for \eqref{eqn:original-problem}. Moreover, for any feasible solution $(\bfbeta,\bfv,\bfy)$ to \emph{\LP{}}, $\bfbeta$ is a feasible solution to \eqref{eqn:original-problem}.
\end{proposition}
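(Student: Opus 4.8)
The plan is to establish the two assertions separately, the upper-bound property being the substantive one and the ``feasibility'' claim being essentially immediate.

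\textbf{The relaxation/upper-bound claim.} I would first record the chain of inclusions that makes \LP{} a relaxation. By Proposition~\ref{prop:valid-formulation}, the MIP formulation \eqref{eqn:mip-formulation} is valid for \eqref{eqn:one-sample}, meaning that the set $F(\bone,\btwo,l,u)$ of points feasible for \eqref{eqn:mip-formulation} projects onto the $(v,y)$ coordinates exactly onto $\cl(\gr(r(\cdot;\bone,\btwo);[l,u]))$. Dropping the integrality constraint \eqref{eqn:mip-constr-5} only enlarges the feasible set, so $F(\bone,\btwo,l,u) \subseteq W(\bone,\btwo,l,u)$, and hence the projection of $W$ onto $(v,y)$ contains $\cl(\gr(r(\cdot;\bone,\btwo);[l,u])) \supseteq \gr(r(\cdot;\bone,\btwo);[l,u])$. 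Now take any $\bfbeta$ feasible for \eqref{eqn:original-problem}, i.e. $\bfbeta \in X$, and set $v_i := \bfw^i \cdot \bfbeta$ and $y_i := r(v_i;\bione,\bitwo)$ for each $i \in \llbracket n \rrbracket$. By the definitions of $l_i$ and $u_i$ we have $v_i \in [l_i,u_i]$, so $(v_i,y_i) \in \gr(r(\cdot;\bione,\bitwo);[l_i,u_i])$, and by the inclusion above there is a $\bfz^i$ with $(v_i,y_i,\bfz^i)$ feasible for the LP relaxation (\ref{eqn:mip-constr-1}--\ref{eqn:mip-constr-4}) of \eqref{eqn:one-sample}. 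Assembling these across $i$ together with $\bfbeta$ gives a point that satisfies \eqref{eqn:graph-opt-2}, \eqref{eqn:graph-opt-4}, and the replaced constraints, hence is feasible for \LP{}, and whose objective \eqref{eqn:graph-opt-1} equals $\frac{1}{n}\sum_i r(\bfw^i\cdot\bfbeta;\bione,\bitwo) = R(\bfbeta)$. Since the optimal value of \LP{} is at least the objective value of any feasible point, it is at least $R(\bfbeta)$; as $\bfbeta$ ranges over all feasible solutions of \eqref{eqn:original-problem}, this is exactly the upper-bound assertion.

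\textbf{The feasibility claim.} This is immediate: \LP{} is obtained from \eqref{eqn:graph-opt} by replacing only \eqref{eqn:graph-opt-3}, so it retains \eqref{eqn:graph-opt-2} and, in particular, \eqref{eqn:graph-opt-4}. Thus any feasible $(\bfbeta,\bfv,\bfy)$ for \LP{} satisfies $\bfbeta \in X$, which is precisely what it means for $\bfbeta$ to be feasible for \eqref{eqn:original-problem}.

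\textbf{Anticipated difficulty.} There is no real obstacle here; the only point requiring care is the bookkeeping between the $(v,y)$-space in which \eqref{eqn:one-sample} and \eqref{eqn:graph-opt-3} are stated and the lifted $(v,y,\bfz)$-space of the formulation. Concretely, one must be careful that the constraint ``$(v_i,y_i)\in W(\bione,\bitwo,l_i,u_i)$'' is read as the existence of auxiliary variables $\bfz^i$ making the system (\ref{eqn:mip-constr-1}--\ref{eqn:mip-constr-4}) feasible, and that validity of the formulation (Proposition~\ref{prop:valid-formulation}) is invoked to pass from a point of $\gr(r)$ to such a lifted feasible point. The membership $v_i \in [l_i,u_i]$ and the objective-value computation are routine.
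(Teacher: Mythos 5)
Your proof is correct and follows essentially the same route as the paper's: the paper's one-line argument invokes Corollary~\ref{cor:entire-mip-formulation} together with the inclusion $F(\bone,\btwo,l,u) \subseteq W(\bone,\btwo,l,u)$, and derives feasibility of $\bfbeta$ from \eqref{eqn:graph-opt-4}, which is exactly what you do, merely unpacking the lifting of a feasible $\bfbeta$ to an \LP{}-feasible point explicitly rather than citing the corollary. No gaps; the extra care you take about the lifted $(v,y,\bfz)$-space is sound but not a point of divergence.
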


Therefore, a third approach to solve \eqref{eqn:original-problem} is simply to solve the linear programming relaxation. Linear programming problems can be solved in polynomial time, and there exist algorithms that can very efficiently solve large scale problem instances. Therefore, the approach of Proposition~\ref{prop:lp-relaxation} can be applied to very large scale instances of the problem \eqref{eqn:original-problem}.


In practice, the Ad Exchange usually processes millions of impressions per minute, and updates the model parameters frequently (say, every 10 minutes) by learning from the data in the past time period. In this large-scale setting, MIP-based algorithms or the approach of Mohri and Medina~\cite{mohri2016learning} are not viable. Fortunately, the LP relaxation method remains viable, as modern Linear Programming solvers, such as Gurobi or CPLEX, can often solve huge LP with millions of variables within minutes.

A corollary of Proposition~\ref{prop:ideal} is that, if $n=1$, the LP relaxation \LP{} is exact, and so exactly represents the convex hull of feasible points for \MIP{}. Unfortunately, the composition of ideal formulations will, in general, fail to be ideal. In fact, the the optimal reward from \LP{} can be arbitrarily bad as $n$ grows.

\begin{proposition} \label{prop:lp-bound}
    There is a family of instances of \eqref{eqn:original-problem}, parameterized by the sample size $n$, where the optimal reward of \eqref{eqn:original-problem} decreases as $1/n$, but the optimal reward for the LP relaxation \emph{\LP{}} is at least $1$.
\end{proposition}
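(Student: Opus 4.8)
The plan is to exhibit an explicit one-dimensional family. Take $d=1$, $n$ samples, feature coefficients $\bfw^i = 2^{-i}$, threshold data $\bione = 2$ and $\bitwo = 0$ for every $i$, and box $X = [0,\,2^{\,n+3}]$. Then $l_i = \min_{\beta\in X} 2^{-i}\beta = 0$ and $u_i = \max_{\beta\in X} 2^{-i}\beta = 2^{\,n+3-i}\ge 8$, and since $\bitwo = 0$ the contribution of sample $i$ at parameter $\beta$ is $r(2^{-i}\beta;2,0)$, which equals $2^{-i}\beta$ when $2^{-i}\beta\in(0,2]$ and $0$ otherwise.

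\textbf{Bounding the true optimum.} First I would show $\sum_{i=1}^n r(2^{-i}\beta;2,0)\le 4$ for every $\beta\in X$, whence $R(\beta)\le 4/n$. Partition the range of $\beta$ into dyadic blocks: if $\beta\in(2^{k},2^{k+1}]$ with $k\ge 1$, then sample $i$ contributes a positive term only when $2^{-i}\beta\le 2$, i.e. only when $i\ge k$, so $\sum_{i=1}^n r(2^{-i}\beta;2,0)\le \beta\sum_{i\ge k}2^{-i}\le 2^{k+1}\cdot 2^{1-k} = 4$ (the case $\beta\le 2$ being even easier, as the sum is then below $\beta\le 2$). Evaluating at $\beta=4$ gives $R(4) = \tfrac1n\sum_{i=1}^n 2^{2-i} = \tfrac{4(1-2^{-n})}{n}\ge \tfrac{2}{n}$. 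Hence the optimal reward of \eqref{eqn:original-problem} lies in $[2/n,\,4/n]$, i.e. it decreases as $1/n$.

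\textbf{The LP relaxation attains objective $1$.} Next I would exhibit a feasible point of \LP{} with objective exactly $1$. Take $\beta^\star = 2^{\,n+2}\in X$, so that $v_i^\star := 2^{-i}\beta^\star = 2^{\,n+2-i} = u_i/2$, set $y_i = 1$ for every $i$, and $\bfz^i = (0,\ \tfrac12 + 1/u_i,\ \tfrac12 - 1/u_i)$. A direct substitution into (\ref{eqn:mip-constr-1})--(\ref{eqn:mip-constr-4}) (with $\bitwo=0$, $\bione=2$, $l_i=0$, $u_i=2^{\,n+3-i}$) verifies that $(v_i^\star,y_i,\bfz^i)$ is feasible for the LP relaxation of \eqref{eqn:one-sample}; geometrically this is just the statement that $(v_i^\star,1)$ lies in $\mathrm{conv}(\cl(\gr(r(\cdot;2,0);[0,u_i])))$ — the triangle with vertices $(0,0),(2,2),(u_i,0)$, whose upper edge over the abscissa $v=u_i/2$ sits at height $\tfrac{2(u_i-v)}{u_i-2}\big|_{v=u_i/2} = \tfrac{u_i}{u_i-2}\ge 1$ since $u_i\ge 8$ (one may also invoke Proposition~\ref{prop:ideal}, which identifies the projection of $W(\bione,\bitwo,l_i,u_i)$ onto $(v_i,y_i)$ with exactly this convex hull). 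Then $(\beta^\star,\bfv^\star,\bfy,\{\bfz^i\})$ with $\bfv^\star = (2^{\,n+1},\dots,2^{2})$ and $\bfy=(1,\dots,1)$ is feasible for \LP{}, with objective $\tfrac1n\sum_{i=1}^n 1 = 1$. Combining this with the previous paragraph proves the claim.

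\textbf{Main obstacle.} The delicate part is choosing the data so that two opposing requirements hold at once: the coefficients $2^{-i}$ must decay fast enough that, for \emph{every} $\beta$, only a geometrically small tail of samples can simultaneously have $v_i\le\bione$ (keeping the true reward $O(1)$ for all $\beta$), while $X$ and $\bione$ must be large enough that a \emph{single} $\beta^\star$ can shove every $v_i^\star$ past $\bione$ — zeroing the true reward there — yet keep each $v_i^\star$ on the descending branch of the convex hull, where the relaxed reward has not yet dropped below $1$. This is precisely what forces $\bione$ strictly above the target value (hence $\bione=2$, not $1$) and the box $X$ to grow exponentially in $n$; once the data is fixed, the two estimates above are routine.
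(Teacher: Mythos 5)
Your proof is correct, but it uses a genuinely different construction from the paper's. The paper works in $d=2$ with $n=2T$ samples, features $\bfw^{\pm,i}=(\pm T,\,1-i)$, bids $(\bione,\bitwo)=(1,0)$, and the box $[-1,1]\times\{1\}$: the huge first coordinate forces at most one sample at a time to land in the profitable interval, so the true optimum is exactly $1/n$, and the LP gap is exhibited at $\bfbeta=(0,1)$, where each $v_i=1-i$ sits far \emph{below} $\bitwo$ yet the \emph{left} (ascending) edge of the per-sample convex hull still credits $y_i=\tfrac{T}{T+i}$. You instead work univariately with geometrically decaying features $2^{-i}$, bids $(2,0)$, and a box growing like $2^{n+3}$: the dyadic-block argument pins the true optimum to $[2/n,4/n]$, and the gap is exhibited at $\beta^\star=2^{n+2}$, where each $v_i^\star$ sits far \emph{above} $\bione$ (true reward zero) yet the \emph{right} (descending) edge of the hull still allows $y_i=1$ because $u_i\ge 8$ and $\tfrac{u_i}{u_i-2}\ge 1$; your explicit $\bfz^i$ (or the appeal to Proposition~\ref{prop:ideal}) correctly certifies LP feasibility. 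The trade-offs: the paper keeps the bids and box bounded but lets feature magnitudes grow linearly in $n$ and gets the true optimum exactly $1/n$; you keep features and bids bounded but let the box grow exponentially in $n$ (consonant with Proposition~\ref{prop:unbounded}) and only bound the true optimum up to a constant factor, which still satisfies "decreases as $1/n$." A small bonus of your version: your exhibited LP point has value exactly $1$, cleanly matching the stated bound, whereas the paper's final display only certifies $\sum_{i=1}^T \tfrac{1}{T+i}\ge \tfrac12$ (its claimed "$=1$" is an arithmetic slip). Both constructions establish the same $\Omega(n)$ gap between the LP relaxation and the true optimum.
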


\section{Computational study}\label{sec:computation}


We now perform a computational study on our proposed methods, using both synthetic and real data.
\subsection{Implementation details}
\textbf{Methods.}
Throughout, we compare seven methods:
\begin{enumerate}
    \item \CP{}: $\max_v \frac{1}{n}\sum_{i=1}^n r(v; \bione, \bitwo)$ -- The optimal constant reserve price policy (i.e, set the reserve price to a constant for all samples without using contextual information). It is used as a benchmark to measure the improvement to be gained from using contextual information.
    \item \LP{}: The linear programming relaxation presented in Proposition~\ref{prop:lp-relaxation}.
    \item \MIP{}: The MIP formulation of Corollary~\ref{cor:entire-mip-formulation} terminated after a time limit (to be specified subsequently).
    \item \MIPR{}: The MIP formulation of Corollary~\ref{cor:entire-mip-formulation} terminated at the root node\footnote{This means the solver will terminate just before begining its enumerative tree search procedure. It will solve the LP relaxation, but crucially will also run a bevy of heuristics to improve primal solutions and dual bounds that require the knowledge that the underlying model is a MIP.}.
    \item \DC{}: The difference-of-convex algorithm of Mohri and Medina~\cite{mohri2016learning}.
    \item \GA{}: Gradient ascent, with a strong Wolfe line search.
    \item \UB{}: $\frac{1}{n}\sum_{i=1}^n \bione$ -- This is a perfect information upper bound equal to the average first bid price. This is the largest reward that can possibly be garnered from the auction. Note that this may be quite a loose upper bound, as in general there will not exist a linear model capable of setting such reserve prices given the contextual information.
\end{enumerate}

\textbf{Hyperparameter tuning.}
The \LP{}, \MIPR{}, and \MIP{} algorithms require that the parameter domain $X$ is explicitly specified. We utilize cross validation to tune the bounds on each parameter as $[-T,+T]$ for $T \in \{2^{-1},\ldots,2^9\}$. Additionally, \DC{} requires two hyperparameters: one for a penalty associated with the bound constraints, and the second for the ``slope'' of its continuous approximation of the discontinuous reward function $r$. We do cross-validation as suggested in Mohri and Medina~\cite{mohri2016learning}.

\textbf{Evaluation.}
For each experiment, we report the \emph{average reward} (i.e. $R(\bfbeta)$) of the final model from each algorithm on both the training and test data sets. We also use the ``gap closed'' metric to measure the improvement of \MIP{} over \DC{}, the best existing algorithm from the literature. This is computed as as $\frac{\MIP{}-\DC{}}{\UB{}-\DC{}}$, where in an abuse of notation we use the algorithm names to denote their respective rewards. 

\textbf{Implementation.}
We implement our experiment in Julia~\cite{Bezanson:2017}. We use JuMP~\cite{Dunning:2015a,Lubin:2013} and Gurobi v8.1.1~\cite{Gurobi} to model and solve, respectively, the optimization problems underlying the \MIP{}, \MIPR{}, \LP{}, and \DC{} methods. Our implementation is publicly available at: \url{https://github.com/joehuchette/reserve-price-optimization}.


\subsection{Synthetic data}\label{sec:artificial-data}


\textbf{Data generation.}
Here we describe how we generate our synthetic data $(\bfw^i, \bione, \bitwo)_{i=1}^n$. First, the feature vectors $\bfw^i$ are generated i.i.d. from a Gaussian distribution with identity covariance matrix, i.e., $\bfw^i\overset{iid}{\sim} d^{-1/2}\Normal(0,I^d)$, normalized so that $\EE\|\bfw^i\|^2_2=1$. In order to generate the bidding prices $\bione$ and $\bitwo$, we assume there are two buyers, and they have underlining generative parameters $\bfc_1$ and $\bfc_2$, such that their bids come from log-normal distributions as $b_1^i\overset{iid}{\sim}\LogNormal(\bfc_1 \cdot \bfw^i, \sigma|\bfc_1 \cdot \bfw^i|)$ and $b_2^i\overset{iid}{\sim}\LogNormal(\bfc_2 \cdot \bfw^i, \sigma|\bfc_2 \cdot \bfw^i|)$, where $\sigma$ controls the signal-to-noise ratio of the log-normal distribution. We then set $\bione=(1+\alpha)\max\{b_1^i, b_2^i\}$ and $\bitwo=(1-\alpha)\min\{b_1^i, b_2^i\}$, where $\alpha$ is a dilation factor to enlarge the difference between $\bione$ and $\bitwo$.\footnote{We note that this dilation is similar to the scaling of linear functions used in the generative model of \cite{mohri2016learning}.} Moreover, the underlying parameters $\bfc_1$ and $\bfc_2$ of the two buyers should be correlated, since the bidding prices for high-valued slots should be high for all buyers. In order to model this, we set $\bfc_1=\bfh_1$ and $\bfc_2=\rho \bfh_1 + \sqrt{1-\rho^2} \bfh_2$, where $\bfh_1, \bfh_2 \overset{iid}{\sim} d^{-1/2}\Normal(0,I^d)$ and $\rho$ controls the correlation between $\bfc_1$ and $\bfc_2$. We normalize the bid prices so that the mean first price is 1.

Overall, we have three parameters in the data generation process: $\sigma$ controls the signal-to-noise level of the model, $\rho$ controls the similarity between two buyers, and $\alpha$ controls the degree of flexibility the seller has when setting a reserve price.

\textbf{Experimentation.}
We fix $d=50$ features, $n=1000$ training samples, along with test and validation data sets each with 5000 samples. We first set a ``baseline'' configuration for our generative model with $\sigma=0.1$, $\rho=0.9$, and $\alpha=0.1$. To explore the robustness of our model to changes in the data generation scheme, we then study three variants of this baseline with ``high noise'' ($\sigma=0.5$), ``low correlation'' ($\rho=0.5$), and ``low margin'' ($\alpha=0.02$). For each of these four parameter settings, we give aggregate results over three trials in Table~\ref{tab:synthetic}. We set a time limit of 3 minutes for each algorithm.

\begin{table}[htpb]
    \begin{subtable}[h]{0.45\textwidth}
        \centering
        \begin{tabular}{c|c|c}
            method & train & test \\ \hline
            \CP{}   & 0.790 \smallpm{0.007} & 0.788 \smallpm{0.002} \\
            \LP{}   & 0.854 \smallpm{0.005} & 0.808 \smallpm{0.011} \\
            \MIP{}  & 0.962 \smallpm{0.006} & 0.924 \smallpm{0.003} \\
            \MIPR{} & 0.962 \smallpm{0.006} & 0.923 \smallpm{0.002} \\
            \DC{}   & 0.776 \smallpm{0.006} & 0.776 \smallpm{0.002} \\
            \GA{}   & 0.516 \smallpm{0.516} & 0.518 \smallpm{0.518} \\
            \UB{}   & 0.999 \smallpm{0.006} & 1.000 \smallpm{0.001}
        \end{tabular}
        \caption{Baseline.}
        \label{tab:baseline}
    \end{subtable}
    \begin{subtable}[h]{0.45\textwidth}
        \centering
        \begin{tabular}{c|c|c}
            method & train & test \\ \hline
            \CP{}   & 0.783 \smallpm{0.015} & 0.777 \smallpm{0.020} \\
            \LP{}   & 0.810 \smallpm{0.008} & 0.774 \smallpm{0.025} \\
            \MIP{}  & 0.907 \smallpm{0.013} & 0.858 \smallpm{0.017} \\
            \MIPR{} & 0.832 \smallpm{0.004} & 0.792 \smallpm{0.026} \\
            \DC{}   & 0.752 \smallpm{0.004} & 0.750 \smallpm{0.007} \\
            \GA{}   & 0.395 \smallpm{0.418} & 0.386 \smallpm{0.419} \\
            \UB{}   & 0.998 \smallpm{0.004} & 1.000 \smallpm{0.001}
        \end{tabular}
        \caption{High noise.}
        \label{tab:high-noise}
    \end{subtable} \\
    \begin{subtable}[h]{0.45\textwidth}
        \centering
    \begin{tabular}{c|c|c}
            method & train & test \\ \hline
            \CP{}   & 0.785 \smallpm{0.026} & 0.781 \smallpm{0.026} \\
            \LP{}   & 0.798 \smallpm{0.037} & 0.766 \smallpm{0.042} \\
            \MIP{}  & 0.942 \smallpm{0.010} & 0.889 \smallpm{0.014} \\
            \MIPR{} & 0.943 \smallpm{0.087} & 0.889 \smallpm{0.013} \\
            \DC{}   & 0.733 \smallpm{0.011} & 0.730 \smallpm{0.013} \\
            \GA{}   & 0.488 \smallpm{0.479} & 0.484 \smallpm{0.480} \\
            \UB{}   & 1.001 \smallpm{0.003} & 0.999 \smallpm{0.001}
        \end{tabular}
        \caption{Low correlation.}
        \label{tab:low-correlation}
    \end{subtable}
    \begin{subtable}[h]{0.45\textwidth}
        \centering
        \begin{tabular}{c|c|c}
            method & train & test \\ \hline
            \CP{}   & 0.911 \smallpm{0.003} & 0.910 \smallpm{0.003} \\
            \LP{}   & 0.917 \smallpm{0.003} & 0.890 \smallpm{0.004} \\
            \MIP{}  & 0.964 \smallpm{0.002} & 0.921 \smallpm{0.007} \\
            \MIPR{} & 0.911 \smallpm{0.003} & 0.910 \smallpm{0.003} \\
            \DC{}   & 0.911 \smallpm{0.003} & 0.910 \smallpm{0.003} \\
            \GA{}   & 0.708 \smallpm{0.220} & 0.706 \smallpm{0.222} \\
            \UB{}   & 1.001 \smallpm{0.001} & 0.999 \smallpm{0.001}
        \end{tabular}
        \caption{Low margin.}
        \label{tab:low-margin}
    \end{subtable}
    \caption{Synthetic data results.}
    \label{tab:synthetic}
\end{table}

In all four experiments, \MIP{} offers an improvement over \DC{}, typically considerably so. On the baseline configuration, \MIP{} closes an average of 83.5\% of the gap left by \DC{} on the training set, and 66.2\% of the gap left remaining on the test set. Unsurprisingly, the high noise configuration leads to degradation of performance with respect to the perfect information upper bound, but \MIP{} is still able to close 65.5\% and 47.4\% of the gap on the training and test data sets, respectively. The low correlation configuration sees \MIP{} closing 79.0\% and 61.5\% of the remaining gap on training and test data sets, respectively, while on the low margin configuration \MIP{} closes 60.5\% of training gap and 16.3\% of test gap.

While \MIPR{} does not quite attain the same level of performance as \MIP{}, it is quite close and still generally outperforms \DC{} both in- and out-of-sample. The \LP{} method also outperforms \DC{} on three of four experiments, albeit by a smaller margin. We observe that \DC{} produces models that lead to sales on nearly every impression. In contrast, the \LP{} algorithm sets reserve prices too aggressively, leading to a model that set reserve prices that lead to failed auctions on roughly 10-20\% of impressions. Additionally, we observe that the \MIP{} and \MIPR{} methods both produce models that yield sales on nearly all impressions. This indicates that they are not exploiting a small number of impressions that garner a high reward, but instead are intelligently setting a reserve price policy that captures excess reward across the population, without too aggressively setting the prices so that many impressions fail to sell.

Finally, \GA{} does very poorly in all settings with quite high variance of the performance. This makes intuitive sense, as gradient information is less informative when the objective is discontinuous.

\subsection{eBay auctions for sports memorabilia}
We now turn our attention to a real data set.
We use a published medium-size eBay data set for reproducibility, which comprises 70,000 sports memorabilia auctions, to illustrate the performance of our algorithms. The data set is provided by Jay Grossman and was subsequently studied in the context of reserve price optimization~\cite{mohri2016learning}.\footnote{``Ebay Data Set'', accessed May 25 2020 from \url{https://cims.nyu.edu/~munoz/data/}. We refer the reader to~\cite{mohri2016learning} for a more detailed description of the data set.} There are $78$ features in the data, with both seller information (e.g. rating and location) and item information. We preprocess the data by normalizing the bidding prices with the mean of their first prices.


Table~\ref{tab:ebay-data-set-experiments} depicts the average and the $95\%$ confidence interval of the cumulative reward on both training and test data set over $10$ random runs. In both, we use 2000 randomly selected samples from the data set for testing and for validation. In Table~\ref{tab:ebay-2000}, we train using 2000 randomly selected samples and a time limit of 5 minutes, while in Table~\ref{tab:ebay-5000} we utilize $5000$ training samples and a time limit of 15 minutes.



In Table \ref{tab:ebay-data-set-experiments}, \MIP{} outperforms all other methods, producing the best performing models as measured on both the training and test data sets. The \DC{} algorithm is the next best performer, producing higher quality models than both \LP{} and \MIPR{}. Indeed, \MIP{} closes 7.39\% of the gap left by \DC{} on the training data set, with respect to the \UB{} upper bound. However, due to a lack of generalization, this number shrinks considerably to 1.66\% on the test data set. There is no doubt that \DC{} has a smaller generalization gap, although one plausible explanation for this could be the additional hyperparameters tuned over in the \DC{} method. Moreover, we emphasize that these gaps are computed based on a conservative upper bound (i.e., \UB{}) which, as observed in Section~\ref{sec:artificial-data}, may be quite loose.

\begin{table}[htpb]
    \begin{subtable}[h]{0.45\textwidth}
        \centering
        \begin{tabular}{c|c|c}
            method & train & test \\ \hline
            \CP{}   & 0.563 \smallpm{0.007} & 0.568 \smallpm{0.010} \\
            \LP{}   & 0.668 \smallpm{0.006} & 0.654 \smallpm{0.020} \\
            \MIP{}  & 0.726 \smallpm{0.009} & 0.714 \smallpm{0.015} \\
            \MIPR{} & 0.657 \smallpm{0.022} & 0.652 \smallpm{0.027} \\
            \DC{}   & 0.704 \smallpm{0.007} & 0.709 \smallpm{0.016} \\
            \GA{}   & 0.396 \smallpm{0.165} & 0.398 \smallpm{0.165} \\
            \UB{}   & 0.992 \smallpm{0.006} & 1.014 \smallpm{0.018}
        \end{tabular}
        \caption{2000 training samples.}
        \label{tab:ebay-2000}
    \end{subtable}
    \begin{subtable}[h]{0.45\textwidth}
        \centering
        \begin{tabular}{c|c|c}
            method & train & test \\ \hline
            \CP{}   & 0.564 \smallpm{0.004} & 0.567 \smallpm{0.023} \\
            \LP{}   & 0.665 \smallpm{0.006} & 0.650 \smallpm{0.016} \\
            \MIP{}  & 0.731 \smallpm{0.009} & 0.725 \smallpm{0.013} \\
            \MIPR{} & 0.596 \smallpm{0.038} & 0.596 \smallpm{0.041} \\
            \DC{}   & 0.704 \smallpm{0.007} & 0.704 \smallpm{0.015} \\
            \GA{}   & 0.363 \smallpm{0.164} & 0.363 \smallpm{0.261} \\
            \UB{}   & 1.002 \smallpm{0.006} & 0.999 \smallpm{0.023}
        \end{tabular}
        \caption{5000 training samples.}
        \label{tab:ebay-5000}
    \end{subtable}
    \caption{Ebay data set experiments.}
    \label{tab:ebay-data-set-experiments}
\end{table}
In order to understand the behavior of the algorithms on larger data sets, we increase the training data sample size to 5000 and repeat the eBay experiments. The results are depicted in Table~\ref{tab:ebay-5000}. While the rankings of the algorithms remains the same, \MIP{} is able to extract more information from the larger data set. The training reward grows, and the models produced also generalize much more successfully to the testing data set. In contrast, the \DC{} algorithm appears unable to exploit the extra available data, with training and test accuracy that remain nearly identical with the previous experiment. Indeed, \MIP{} is able to close 9.11\% of the remaining gap on the training data set, and 7.01\% on the testing data set.

Comparing Table \ref{tab:ebay-2000} and Table \ref{tab:ebay-5000}, we can clearly see that the difference in reward produced by \MIP{} between the training and test data sets decreases as number of samples increases. This is intuitively consistent with what could be expected from a learning theory analysis, and we expect that this gap will likely keep shrinking in the ``big data'' regime as we further enlarge the training sample size.

\section{Conclusion and Future Directions}
In this paper, we study the linear model for reserve price optimization in a second-price auction. We first show that this is indeed a hard problem -- unless the Exponential Time Hypothesis fails, there is no polynomial time optimal algorithm. Then we propose a mixed-integer programming formulation and a LP relaxation for solving the problem. Linear models are the simplest learning model for this problem with fast inference and straightforward interpretability. How to extend our approaches developed herein to other learning methods, such as kernel methods and optimal decision trees, are solid future research directions.



\section*{Broader Impact}
This work presents new methods, and as such does not have direct societal impact. However, if the context provided allows the model to reason about protected classes or sensitive information, either directly or indirectly, the model--and, therefore, the application of this work--has the potential for adverse effects.

\section*{Funding Disclosure}
No third-party funding was received for this work.

\bibliographystyle{plain}
\bibliography{auction}
\clearpage

\appendix
\section{Deferred Proofs}

\subsection{Proof of Theorem~\ref{thm:hardness}} \label{app:hardness}
 \begin{proof}
Let $G=(V_G,E_G)$ be an arbitrary input graph to the $k$-densest subgraph problem, where $V_G$ is the vertex set of the graph and $E_G$ is the edge set of the graph. We construct an input to the reserve price optimization problem \eqref{eqn:original-problem} based on $G$, so that if it were possible to solve the reserve price optimization problem for this input in polynomial time, this would imply that it is possible to find an $1/8$-approximate solution to the $k$-densest subgraph problem on $G$ in polynomial time. However, it is known that it is impossible to give a polynomial time $1/8$ approximation algorithm for the densest subgraph problem unless the exponential time hypothesis fails~\cite{manurangsi2017almost}. This implies that it is impossible to solve the reserve price optimization problem \eqref{eqn:original-problem} unless the exponential time hypothesis fails.

Next, we explain how to construct an input to the reserve price optimization problem \eqref{eqn:original-problem} based on $G$.
In the optimization problem we set $X=[0,1]^d$. We have two types of impressions as explained below.
\begin{itemize}
    \item We have $|V_G|^2$ impressions $(\bfw_1,k,0)$, where $\bfw_1=\langle 1,1,\dots,1 \rangle$.
    \item For each edge $e=(u,v)\in E_G$, we have one impression $(\bfw_e,2,1.5)$, where $\bfw_e$ is a feature vector in which the components corresponding to $v$ and $u$ are $1$, and all other components are $0$.
\end{itemize}
First, we lower bound the optimal solution of the optimization problem \eqref{eqn:original-problem} for this input.
Consider a densest subgraph $H=(V_H,E_H)$ of $G$, where $V_H$ is the vertex set of $H$ and $E_H$ is the edge set of $H$. We define $\bfbeta_H$ to be a feature vector in which the features corresponding to the vertices of $V_H$ are $1$, and all other features are $0$. Next we bound $R(\bfbeta_H)$. We use this as a lower bound the optimum solution of the optimization problem \eqref{eqn:original-problem}.

Note that $\bfw_1\cdot \bfbeta_H = k$, and hence the contribution of each of the first type of impressions to $R(\bfbeta_H)$ is $\frac{k}{n}$.
Also, for each edge $e\in E_H$ we have
$\bfw_e\cdot \bfbeta_H = 2$
and hence the contribution of each of the second type of impressions corresponding to an edge in $E_H$ to $R(\bfbeta_H)$ is $\frac{2}{n}$. Therefore, we have
\begin{align}\label{eq:RH}
    R(\bfbeta_H) = \frac{1}{n}\Big(k|V_G|^2+1.5|E_G|+0.5|E_H|\Big).
\end{align}

Next, we upper bound the optimal solution of the optimization problem \eqref{eqn:original-problem} for our input.
Let $\bfbeta=\langle\beta_1,\dots,\beta_{V_G}\rangle$ be the vector that maximizes $R(\bfbeta)$. Note that if $\sum_v \beta_v > k$, the contribution of the first type of impressions is $0$. This means that $R(\bfbeta)\leq 2|E_G|< R(\bfbeta_H)$, which is a contradiction. Therefore, without loss of generality we can assume that $\sum_i \beta_i \leq k$.

Let $V^{\bfbeta}$ be the set of vertices in $V_G$ with $\beta_v\geq 0.5$. Let $G^{\bfbeta}=(V^{\bfbeta},E^{\bfbeta})$ be the subgraph of $G$ induced by $V^{\bfbeta}$.
Note that if for a vertex $v$ we have $\beta_v< 0.5$, then for each edge $e=(v,u)$ neighboring $v$, we have $\bfw_e\cdot \bfbeta_H \leq 1+0.5 =1.5$. Therefore, we have
\begin{align}\label{eq:Rbeta}
    R(\bfbeta) \leq k|V_G|^2+1.5|E_G|+0.5|E^{\bfbeta}|.
\end{align}

Now, we put inequalities \eqref{eq:RH} and \eqref{eq:Rbeta} together to complete the proof.
By the optimality of $\bfbeta$ we have $R(\bfbeta_H)\leq R(\bfbeta)$. This together with inequalities \eqref{eq:RH} and \eqref{eq:Rbeta} implies that $|E_H|\leq |E^{\bfbeta}|$. Moreover, recall that for every vertex $v$ in $V^{\bfbeta}$ we have $\beta_v\geq 0.5$. Also, we have $\sum_i \beta_i \leq k$. Hence, we have $|V^{\bfbeta}|\leq 2k$. Given a graph with $2k$ vertices, one can easily cover the edges with $8$ subgraphs of size $k$. By the pigeon hole principle one of these subgraphs contains $\frac{E^{\bfbeta}}{8}\geq\frac{E_H}{7}$ edges, and hence it is a $\frac 1 8$-approximate solution to the densest subgraph.
\end{proof}

\subsection{Proof of Proposition~\ref{prop:closure-is-fine}} \label{app:closure-is-fine}
\begin{proof}
    First, we show that each optimal solution for \eqref{eqn:original-problem} has a corresponding feasible point for \eqref{eqn:graph-opt} with equal objective value. Take some $\bfbeta^*$ optimal for \eqref{eqn:original-problem}. Setting $v^*_i = \bfw^i \cdot \bfbeta^*$ for each $i$, the feasibility of $\bfbeta^*$ (i.e. $\bfbeta^* \in X$) implies that $l_i \leq v^*_i \leq u_i$ from the definition of $l_i$ and $u_i$. Now take $y^*_i = r(v^*_i)$ for each $i$; clearly \eqref{eqn:graph-opt-3} is satisfied. Therefore, $(\bfbeta^*,\bfv^*,\bfy^*)$ is feasible for \eqref{eqn:graph-opt} and has objective value $\frac{1}{n} \sum_{i=1}^n r(v_i; \bione, \bitwo)$.

    Next, we show that each optimal solution $(\bfbeta^*,\bfv^*,\bfy^*)$ for \eqref{eqn:graph-opt} corresponds to a feasible point $\bfbeta^*$ for \eqref{eqn:original-problem} with the same objective value. Clearly $\bfbeta^*$ is feasible for \eqref{eqn:original-problem}. Additionally, \eqref{eqn:graph-opt-3} means that for each $i$, if $v^*_i \neq \bione$ then $y^*_i = r(v^*_i)$, whereas if $v^*_i = \bione$ then $y^*_i \in \{r(v^*_i) \equiv \bione, 0\}$. As $\bione \geq 0$, the optimality of $(\bfbeta^*,\bfv^*,\bfy^*)$ implies that we must have $y^*_i = r(v^*_i)$. Therefore, the objective value of $(\bfbeta^*,\bfv^*,\bfy^*)$ is $\frac{1}{n}\sum_{i=1}^n r(v^*_i; \bione, \bitwo)$, giving the result.
\end{proof}

\subsection{Proof of Proposition~\ref{prop:valid-formulation}} \label{app:valid-formulation}
\begin{proof}
Suppose $(\hat{\bfv},\hat{\bfy},\hat{\bfz})$ is feasible for \eqref{eqn:mip-formulation}. It follows from (\ref{eqn:mip-constr-4}-\ref{eqn:mip-constr-5}) that exactly one component of $\hat{z}$ is equal to one, with the other two components equal to zero. We now consider each of these three cases.


If $\hat{z}_1 = 1$, then the constraints (\ref{eqn:mip-constr-1}--\ref{eqn:mip-constr-3}) reduce to $y=\btwo$, $v \leq y \leq v + \btwo - l$, and $l \leq v \leq u$. Plugging in the equation for $y$ into the second pair of inequalities yields $l \leq v \leq \btwo$, which are the constraints defining $S_1$.

If $\hat{z}_2 = 1$, the constraints (\ref{eqn:mip-constr-1}--\ref{eqn:mip-constr-3}) reduce to $\btwo \leq y \leq \bone$, $y = v$, and $l \leq v \leq u$, which are equivalent to the constraints defining $S_2$.

Finally, if $\hat{z}_3 = 1$, the constraints (\ref{eqn:mip-constr-1}--\ref{eqn:mip-constr-3}) reduce to $y=0$, $v - u \leq y \leq v - \bone$, and $l \leq v \leq u$. Plugging the equation into the pair of inequalities yields $\btwo \leq v \leq u$, i.e. the constraints are equivalent to those defining $S_3$.
\end{proof}

\subsection{Proof of Proposition~\ref{prop:ideal}} \label{app:ideal}
\begin{lemma} \label{lemma:cl-gr}
    The closure of $\gr(r(\cdot; \bone, \btwo); D)$ is $S_1 \cup S_2 \cup S_3$, where
    \begin{subequations} \label{eqn:cl-graph}
        \begin{align}
            S_1 &= \Set{ (v,y) \in D \times \bbR | \begin{array}{c} y = \btwo \\ v \leq \btwo \end{array}} \\
            S_2 &= \Set{ (v,y) \in D \times \bbR | \begin{array}{c} y = v \\ \btwo \leq v \leq \bone \end{array}} \\
            S_3 &= \Set{ (v,y) \in D \times \bbR | \begin{array}{c} y = 0 \\ v \geq \bone \end{array}}.
        \end{align}
    \end{subequations}
\end{lemma}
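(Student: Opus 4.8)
The plan is a direct computation of the closure, obtained by partitioning the domain $D=[l,u]$ into the three intervals on which $r(\cdot;\bone,\btwo)$ is affine. First I would verify the easy inclusion $\gr(r(\cdot;\bone,\btwo);D)\subseteq S_1\cup S_2\cup S_3$: for $v\in D$, the point $(v,r(v;\bone,\btwo))$ equals $(v,\btwo)$ when $v\le\btwo$, equals $(v,v)$ when $\btwo<v\le\bone$, and equals $(v,0)$ when $v>\bone$, and in each case the defining (in)equalities of the corresponding $S_j$ hold (the strict inequalities in the branches of $r$ only weaken to non-strict ones in the $S_j$). Hence the graph lies in $S_1\cup S_2\cup S_3$.

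Next I would observe that $S_1\cup S_2\cup S_3$ is closed, which by monotonicity of $\cl$ combined with the first inclusion already gives $\cl(\gr(r(\cdot;\bone,\btwo);D))\subseteq S_1\cup S_2\cup S_3$. This is routine: $D$ is a closed interval, so $D\times\bbR$ is closed in $\bbR^2$; each $S_j$ is the intersection of $D\times\bbR$ with one affine hyperplane ($y=\btwo$, $y=v$, or $y=0$) and one closed half-plane, hence closed; and a finite union of closed sets is closed.

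For the reverse inclusion $S_1\cup S_2\cup S_3\subseteq\cl(\gr(r(\cdot;\bone,\btwo);D))$ I would show that $S_1$ and $S_2$ are in fact contained in the graph itself, while $S_3$ is contained in its closure. If $(v,y)\in S_1$ then $v\in D$, $v\le\btwo$, and $y=\btwo=r(v;\bone,\btwo)$, so $(v,y)\in\gr$; if $(v,y)\in S_2$ then $v\in D$, $\btwo\le v\le\bone$, and $r(v;\bone,\btwo)=v=y$ (note that at the shared endpoint $v=\btwo$ both branches of $r$ evaluate to $\btwo$, so nothing special is needed there, and this is also why taking closures adds no new point above $v=\btwo$). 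If $(v,y)\in S_3$ with $v>\bone$, then $y=0=r(v;\bone,\btwo)$, so again $(v,y)\in\gr$; the one remaining point is $(\bone,0)$, which is generally \emph{not} in the graph since $r(\bone;\bone,\btwo)=\bone$, but which is the limit of the graph points $(\bone+\epsilon,\,0)$ as $\epsilon\downarrow0$ (these lie in $\gr$ as soon as $\bone+\epsilon\in D$), hence lies in $\cl(\gr)$. Combining the two inclusions yields the claimed equality.

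The only place requiring any care — and the closest thing to an obstacle in an otherwise bookkeeping argument — is the behaviour at the two breakpoints: at $\btwo$ the two adjacent affine pieces of $r$ agree in value, so the overlap $S_1\cap S_2$ is harmless and the closure introduces nothing there; at $\bone$ the function drops to $0$, and it is precisely the single point $(\bone,0)$, approached from the right within $D$, that the closure appends to the graph. Everything else is a matter of checking which half-space each branch of $r$ occupies.
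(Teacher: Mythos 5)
The paper states this lemma without proof (it appears in the appendix only as a stepping stone, immediately followed by the proof of Proposition~\ref{prop:ideal}), so there is no argument of record to compare against; your direct verification is the natural way to fill that gap and is essentially correct. Both inclusions are handled properly: $S_1\cup S_2\cup S_3$ is closed (finite union of intersections of $D\times\bbR$ with affine equations and closed half-planes) and contains the graph, while $S_1$, $S_2$, and the portion of $S_3$ with $v>\bone$ lie in the graph itself, so the only content of taking the closure is the single point $(\bone,0)$, exactly as you isolate. One caveat, which your parenthetical ``as soon as $\bone+\epsilon\in D$'' brushes against but does not resolve: the limiting argument for $(\bone,0)$ requires that $D=[l,u]$ contain points strictly to the right of $\bone$, i.e.\ $u>\bone$. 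In the degenerate case $u=\bone$ the point $(\bone,0)$ belongs to $S_3$ but not to $\cl(\gr(r(\cdot;\bone,\btwo);D))$, so the stated equality literally fails there; this is an imprecision in the lemma as written rather than in your reasoning, but to be airtight you should either add the hypothesis $u>\bone$ (or $\bone\notin D$), or weaken the conclusion to the two inclusions $\gr\subseteq S_1\cup S_2\cup S_3$ and $S_1\cup S_2\cup S_3\subseteq\cl(\gr)$ holding in the generic case, which is all that the subsequent convex-hull computation in Proposition~\ref{prop:ideal} actually uses.
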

\begin{proof}[Proof (Proposition~\ref{prop:ideal})]
    Take $D$ as the set of all $(v,y,\bfz)$ feasible for \eqref{eqn:mip-formulation}. Using Lemma~\ref{lemma:cl-gr}, we can infer that $D = \bigcup_{i=1}^3 (S_i \times \{{\bf e}^i\})$, where ${\bf e}^i \in \{0,1\}^3$ is the $i$-th unit vector of all zeros except a 1 in the $i$-th coordinate. Therefore, it can be expressed as a finite union of bounded polyhedron. Applying techniques due to Balas~\cite{Balas:1985,Balas:1998}, we can write a lifted representation for the convex hull of $D$, i.e. one with auxiliary $v^i$ and $y^i$ variables:
    \begin{subequations} \label{eqn:balas}
    \begin{align}
        v &= \sum_{i=1}^3 v^i \label{eqn:balas-1} \\
        y &= \sum_{i=1}^3 y^i \label{eqn:balas-2} \\
        y^1 &= \btwo z_1 \label{eqn:balas-3} \\
        lz_1 &\leq v^1 \leq \btwo z_1 \label{eqn:balas-4} \\
        y^2 &= v^2 \label{eqn:balas-5} \\
        \btwo z_2 &\leq v^2 \leq \bone z_2 \label{eqn:balas-6} \\
        y^3 &= 0 \label{eqn:balas-7} \\
        \bone z_3 &\leq v^3 \leq uz_3 \label{eqn:balas-8} \\
        1 &= z_1 + z_2 + z_3 \label{eqn:balas-9} \\
        \bfz &\in [0,1]^3. \label{eqn:balas-10}
    \end{align}
    \end{subequations}
    Moreover, if $R$ is the set of all points feasible for \eqref{eqn:balas}, it is known that $\operatorname{Proj}_{v,y,\bfz}(R) = \operatorname{Conv}(D)$, i.e. the orthogonal projection eliminating the auxiliary variables $v^i$ and $y^i$ yields the convex hull of the set of interest $D$. Therefore, the result follows by explicitly computing this projection, yielding a system of linear constraints equivalent to the LP relaxation of \eqref{eqn:mip-formulation}, i.e. (\ref{eqn:mip-constr-1}-\ref{eqn:mip-constr-4}).

    Use the three equations \eqref{eqn:balas-3}, \eqref{eqn:balas-5}, and \eqref{eqn:balas-7} to eliminate the $y^i$ variables. Then we may use the remaining equations (\ref{eqn:balas-1}-\ref{eqn:balas-2}) to eliminate $v^1$ and $v^2$, leaving the system
    \begin{align*}
        lz_1 &\leq v - y + \btwo z_1 - v^3 \leq \btwo z_1 \\
        \btwo z_2 &\leq y - \btwo z_1 \leq \bone z_2 \\
        \bone z_3 &\leq v^3 \leq uz_3 \\
        1 &= z_1 + z_2 + z_3 \\
        \bfz &\in [0,1]^3.
    \end{align*}
    We may then apply the Fourier-Motkzin elimination procedure (e.g. Chapter 3.1 of~\cite{Conforti:2014}) to project out the last remaining auxiliary variable $v^3$, giving the result.
\end{proof}

\subsection{Proof of Proposition~\ref{prop:unbounded}} \label{app:unbounded}
\begin{proof}
Parameterize the sequence of instances by $i$. For each $i$, define $\bfw^{i,1} = (\sqrt{1-i^{-2}},i^{-1})$, $\bfw^{i,2} = (-\sqrt{1-i^{-2}},i^{-1})$, $\bione=1$, and $\bitwo=0$. Note that $||\bfw^{i,1}||_2 = ||\bfw^{i,2}||_2 = 1$, and so all the problem data is bounded in magnitude by one. The unique optimal solution to \eqref{eqn:original-problem} is $\bfbeta^{i,*} = (0,i)$, giving the result.
\end{proof}

\subsection{Proof of Proposition~\ref{prop:lp-relaxation}} \label{app:lp-relaxation}
\begin{proof}
  The bound on objective values follows immediately from Corollary~\ref{cor:entire-mip-formulation} and the fact that $F(\bone,\btwo,l,u) \subseteq W(\bone,\btwo,l,u)$ for any choice of data. Additionally, as \eqref{eqn:original-problem} only constrains $\bfbeta \in X$, feasibility follows from \eqref{eqn:graph-opt-4}.
\end{proof}

\subsection{Proof of Proposition~\ref{prop:lp-bound}} \label{app:lp-bound}
\begin{proof}
    Consider the following problem instance parameterized by a positive integer $T$. Take $n=2T$, $m=2$, and $X = [-1,+1] \times \{+1\}$. Furthermore, for each $i \in \llbracket T \rrbracket$, define $\bfw^{+,i} = (T,1-i)$, $b^{(1)}_{+,i} = 1$, and $b^{(2)}_{+,i}=0$. Similarly, for each $i \in \llbracket T \rrbracket$, define $\bfw^{-,i} = (-T,1-i)$, $b^{(1)}_{-,i} = 1$, and $b^{(2)}_{-,i}=0$. From inspection, we can observe that for any $\bfbeta \in X$, there is at most one $i$ with $r(\bfw^i \cdot \bfbeta; \bione, \bitwo) > 0$. Therefore, we can infer that the optimal reward for \eqref{eqn:original-problem} is $1$, which can be attained by setting $\bfbeta = (k/T,1)$ for any $k \in \bigcup_{k=1}^T \{-k/T,+k/T\}$.

    In contrast, the LP relaxation bound can be bounded below by a constant. By projecting out the auxiliary $\bfz$ variables from the LP relaxation (\ref{eqn:mip-constr-1}-\ref{eqn:mip-constr-4}), we can compute that the convex hull of $\cl(\gr(r(\cdot; 0, 1); [l,u]))$ is
    \begin{align*}
        &Q(l,u) :=
        \\&\Set{(v,y) \in [l,u] \times \bbR_{\geq 0} | y \leq \frac{1}{1-l}(v-l), \: y \leq \frac{1}{u-1}(u-v)}.
    \end{align*}
    Furthermore, for each $i \in \llbracket T \rrbracket$ we can computer valid bounds on $v_{+,i}$ as $l_{+,i} = \min_{\beta \in X}\bfw^{+,i} \cdot \bfbeta = -T + 1 - i$ and $u_{+,i} = \max_{\beta \in X}\bfw^{+,i} \cdot \bfbeta = T + 1 - i$. Similarly, valid bounds for each $v^{-,i}$ are $l_{-,i} = -T + 1 - i$ and $u_{-,i} = T + 1 - i$. Piecing it all together, we now fix $\bfbeta = (0,1)$, which due to \eqref{eqn:graph-opt-2} will fix $v_{+,i} = v_{-,i} = 1 - i$ for each $i \in \llbracket T \rrbracket$. Accordingly, the largest value we may set $y_{+,i}$ such that $(v_{+,i},y_{+,i}) \in Q(l_{+,i},u_{+,i})$ is $y_{+,i} = \frac{T}{T+i}$. Similarly, the maximum allowed value for each $y_{-,i}$ such $(v,y) \in W(\bone,\btwo,l,u)$ is satisfied is $y_{-,i} = \frac{T}{T+i}$. The reward at this LP feasible point is then
    \begin{align*}
    \frac{1}{n} \sum_{i=1}^T (y^{+,i} + y^{-,i}) &= \frac{1}{n} \sum_{i=1}^T \left(\frac{T}{T+i} + \frac{T}{T+i}\right) \\&= \sum_{i=1}^T \frac{1}{T+i} \geq \sum_{i=1}^T \frac{1}{2T} = 1.
    \end{align*}
\end{proof}

\end{document}